\documentclass[a4paper,12pt,reqno]{amsart}

\usepackage{hyperref}
\usepackage{bookmark}
\hypersetup{
	pdfstartview=FitH,
	colorlinks=true,
	linkcolor=blue,
	citecolor=red,
}
\usepackage[dvipsnames]{xcolor}

\usepackage{amsfonts,amsmath,amssymb,amsfonts,amsthm}
\usepackage[left=3cm,right=2.6cm,top=2.7cm,bottom=2.5cm,twoside]{geometry}
\numberwithin{equation}{section}

\usepackage{times}


{
\newtheorem{theorem}{Theorem}[section]

\newtheorem{lemma}[theorem]{Lemma}

}

\newtheorem{conjecture}{Conjecture}

\newtheorem{thm}{Theorem}

{\theoremstyle{definition}
	}


\setlength\arraycolsep{2pt}

\title[On the periodicity of entire functions]{On a theorem of A. and C. R\'enyi and a conjecture of C. C. Yang concerning periodicity of entire functions}
\author{Z.~Latreuch and M.~A.~Zemirni$^*$}
\thanks{$^*$ Corresponding author.}
\date{}

\address[Z.~Latreuch]{University of Mostaganem\\
	Department of Mathematics\\
	Laboratory of Pure and Applied Mathematics\\
	B.~P.~227 Mostaganem, Algeria.}
\email{z.latreuch@gmail.com}

\address[M.~A.~Zemirni]{University of Eastern Finland\\
	Department of Physics and Mathematics\\
	P.O. Box 111, 80101 Joensuu, Finland.}
\email{amine.zemirni@uef.fi}

\subjclass[2020]{30D20, 30D35.}
\keywords{Differential polynomials, entire functions, Nevanlinna theory, order of growth, periodic functions, Yang's conjecture.}

\begin{document}
\maketitle


\begin{abstract}
	A theorem of A. and C. R\'enyi on periodic entire functions states that an entire function $f(z) $ must be periodic if $ P(f(z)) $ is periodic, where $ P(z) $ is a non-constant polynomial. By extending this theorem, we can answer some open questions related to the conjecture of C. C. Yang concerning periodicity of entire functions. Moreover, we give more general forms for this conjecture and we prove, in particular, that $ f(z) $ is periodic if either  $ P(f(z)) f^{(k)}(z) $ or $ P(f(z))/f^{(k)}(z) $ is periodic, provided that $ f(z) $ has a finite Picard exceptional value.  We also investigate the periodicity of $ f(z) $ when $ f(z)^{n}+a_{1} f^{\prime}(z)+\cdots+a_{k} f^{(k)}(z) $ is periodic. In all our results, the possibilities for the period of $ f(z) $ are determined precisely.
\end{abstract}

\section{Introduction}

Periodicity of entire functions is associated with numerous difficult problems despite its simple concept. It has been studied from different aspects, such as uniqueness theory, composite functions, differential and functional equations; 
%
see \cite{L} and references therein. 
In this paper, special attention is paid to the problem of \emph{the periodicity of entire functions $ f(z) $ when particular differential polynomials generated by $ f(z) $ are given to be periodic}. By using concepts from Nevanlinna theory  (see, e.g., \cite{Laine,YY}), we can extend the following result due to Alfr\'ed and Catherine R\'enyi.
		\begin{thm}[{\cite[Theorem~2]{RR}}]\label{thm_RR}
			Let $Q(z)$ be an non-constant polynomial and $f(z)$ be an entire function. If $Q(f(z))$ is a periodic function, then $f(z)$ must be periodic.
		\end{thm}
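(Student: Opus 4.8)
The plan is to track how shifting by a period of $Q(f)$ permutes the finitely many $Q$-preimages of a value, and then to upgrade a pointwise pigeonhole coincidence into a genuine period of $f$ by a covering argument.

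First I would dispose of the trivial case: if $f$ is constant it is periodic, while if $f$ is non-constant then $G:=Q\circ f$ is non-constant as well, since a continuous map of the connected set $\mathbb{C}$ into the finite set $Q^{-1}(\text{const})$ must be constant. So assume $G$ is non-constant and periodic with some period $c\neq 0$, and set $n=\deg Q\ge 1$. Because $G(z+kc)=G(z)$ for every $k\in\mathbb{Z}$, each shifted value $f(z+kc)$ is a root of $Q(w)=G(z)$; hence for every fixed $z$ the whole orbit $\{f(z+kc):k\in\mathbb{Z}\}$ lies in the finite set $Q^{-1}(G(z))$, which has exactly $n$ elements as soon as $G(z)$ avoids the (finite) set of critical values of $Q$.

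Call such $z$ \emph{generic}. Since $G$ is non-constant and $Q$ has only finitely many critical values, the non-generic points form a discrete set: they are the zeros of the finitely many entire functions $G-t_0$, none of which vanishes identically, as $t_0$ ranges over the critical values. For a generic $z$ the $n+1$ numbers $f(z),f(z+c),\dots,f(z+nc)$ lie in an $n$-element set, so by the pigeonhole principle there are integers $0\le k_1<k_2\le n$ with $f(z+k_1c)=f(z+k_2c)$. Writing $m=k_2-k_1\in\{1,\dots,n\}$ and $\phi_m(z)=f(z+mc)-f(z)$, this says exactly that $z\in(E_m-k_1c)$, where $E_m=\{\phi_m=0\}$.

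Consequently every generic $z$ belongs to the finite union $\bigcup_{0\le k_1<k_2\le n}(E_{k_2-k_1}-k_1c)$. Each $\phi_m$ is entire, so $E_m$ is either all of $\mathbb{C}$ or a discrete (hence countable) set. If $\phi_m\not\equiv 0$ for every $m\in\{1,\dots,n\}$, this union would be countable, whereas the generic points form the complement of a discrete set and are therefore uncountable — a contradiction. Hence $\phi_m\equiv 0$ for some $m\in\{1,\dots,n\}$, i.e.\ $f(z+mc)=f(z)$ identically, and $f$ is periodic with period $mc$. I expect the main obstacle to be precisely this passage from the pointwise coincidence to a single global period: the integer $m$ produced by the pigeonhole a priori depends on $z$, and the crux is the observation that a co-discrete (hence uncountable) set cannot be covered by finitely many shifted zero-sets unless one of them is the whole plane. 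Making this rigorous rests on the two structural facts isolated above — that $G=Q\circ f$ is non-constant, so its level sets are discrete, and that $Q$ has finitely many critical values, so the non-generic locus is discrete — after which the argument additionally pins the period of $f$ down to an integer multiple $mc$ with $1\le m\le\deg Q$ of the period of $Q(f)$.
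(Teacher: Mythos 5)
Your argument is correct and complete, but it is worth saying up front that the paper contains no proof of Theorem~\ref{thm_RR}: the statement is quoted from R\'enyi--R\'enyi \cite{RR} and used as a black box, so there is no in-paper proof to match. What the paper does prove are the extensions, Theorems~\ref{lemma1} and \ref{lemma2}, and there the method is entirely Nevanlinna-theoretic: the periodicity relation is rewritten as an identity $\sum_j f_j=1$ among products of powers of $g_c/g$ and the small coefficient $A_c/A$, and the Borel-type Lemma~\ref{lemY} forces one term to be identically $1$. Your route is genuinely different and more elementary: for each $z$ the orbit $f(z+kc)$ lies in the fibre $Q^{-1}(Q(f(z)))$, which has at most $n=\deg Q$ points, pigeonhole gives $f(z+k_1c)=f(z+k_2c)$ for some $0\le k_1<k_2\le n$, and a co-countable subset of $\mathbb{C}$ cannot be covered by finitely many translates of zero sets of not-identically-vanishing entire functions, so some difference $f(z+mc)-f(z)$ vanishes identically. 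All the steps check out; in fact the genericity reduction is not even needed, since every fibre of $Q$ has at most $n$ points, so the pigeonhole and the cardinality argument apply on all of $\mathbb{C}$. What your argument buys is that it is completely elementary, imposes no hypotheses on $f$ or on the zeros of $f$, and pins the period of $f$ down to $mc$ with $1\le m\le \deg Q$ (a refinement consistent with the $\delta_Q c$ phenomenon in Theorem~\ref{lemma1}); what it does not give, and what the value-distribution machinery of Section~\ref{RR} is designed for, is the case of a non-constant small-function factor $A(z)$ or of rational expressions $Q(g)/g$, where the relevant ``fibres'' are no longer finite sets independent of $z$ and the finite-to-one bookkeeping breaks down.
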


With the help of the extensions of Theorem~\ref{thm_RR} together with other results from Nevanlinna theory, we study the problem mentioned above and answer some related open questions. In particular, we are interested in the following conjecture and its variations.

\begin{conjecture}[Generalized Yang's conjecture]\label{con}
	Let $f(z)$ be a transcendental entire function and $n,k$ be positive integers. If $f(z)^n f^{(k)}(z)$ is a periodic function, then $f(z)$ is also a periodic function.
\end{conjecture}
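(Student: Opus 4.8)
The plan is to turn periodicity of $g(z):=f(z)^{n}f^{(k)}(z)$ into a functional equation and then read off the structure of $f$ at its exceptional values. If $g$ has period $c$, then $f(z+c)^{n}f^{(k)}(z+c)=f(z)^{n}f^{(k)}(z)$ identically, so with $u(z):=f(z+c)/f(z)$ and $(uf)^{(k)}=\sum_{j=0}^{k}\binom{k}{j}u^{(j)}f^{(k-j)}$ the equation becomes $u(z)^{n}(uf)^{(k)}(z)=f^{(k)}(z)$. The whole problem then collapses to showing that $u$ is \emph{constant}: once $u\equiv\lambda$, the relation forces $\lambda^{n+1}=1$, hence $f(z+(n+1)c)=\lambda^{n+1}f(z)=f(z)$ and $f$ is periodic with period dividing $(n+1)c$.

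The most transparent instance is $k=1$, where $f^{n}f'=\tfrac{1}{n+1}\bigl(f^{n+1}\bigr)'$; periodicity of the left-hand side yields $f(z+c)^{n+1}-f(z)^{n+1}=A$ for some constant $A$. If $A=0$ then $f^{n+1}$ is periodic and Theorem~\ref{thm_RR} with $Q(w)=w^{n+1}$ gives at once that $f$ is periodic. I would discard the case $A\neq0$ by Picard's theorem: in the model situation where $0$ is the exceptional value one has $f=e^{h}$ and $e^{(n+1)h(z+c)}=e^{(n+1)h(z)}+A$, whose right-hand side vanishes somewhere while the left-hand side never does, a contradiction.

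For general $k$ I would invoke the hypothesis that $f$ has a finite Picard exceptional value $a$, so that $f-a$ is zero-free and $f=a+e^{h}$ for some entire $h$. Then $f^{(k)}=\phi_{k}\,e^{h}$, where $\phi_{k}=\phi_{k}(h',\dots,h^{(k)})$ is a universal differential polynomial with leading term $(h')^{k}$, and substituting into $g(z+c)=g(z)$ reduces everything to proving that $H(z):=h(z+c)-h(z)$ is constant; indeed, in the model case $a=0$ one gets the clean identity $\phi_{k}(z+c)/\phi_{k}(z)=e^{-(n+1)H(z)}$. I would attack this by differentiating logarithmically, $g'/g=(n+1)h'+\phi_{k}'/\phi_{k}$, which is again $c$-periodic, and then combine the lemma on the logarithmic derivative with estimates on the difference operator $h(z+c)-h(z)$ to force $H'\equiv0$. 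Once $H$ is constant, all derivatives $h^{(j)}$ ($j\ge1$) are $c$-periodic, so $f(z+c)=e^{H}f(z)+a\bigl(1-e^{H}\bigr)$ and $f^{(k)}(z+c)=e^{H}f^{(k)}(z)$; feeding this back into $g(z+c)=g(z)$ gives a polynomial identity in the transcendental $f$ that pins the period precisely (exactly $c$ when $a\neq0$, a divisor of $(n+1)c$ when $a=0$).

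The hard part will be the step that rules out a non-constant $u$---equivalently a non-constant $H$---\emph{without} any exceptional-value hypothesis. When $f$ has no finite Picard exceptional value one cannot pass to the representation $f=a+e^{h}$, the derivative factor $f^{(k)}$ genuinely entangles with the algebraic factor $f^{n}$, and the logarithmic-derivative comparison above no longer isolates $H$; this is precisely the gap that keeps the statement at the level of a conjecture. My expectation is therefore that a finite Picard exceptional value is the minimal structural input that renders $f^{(k)}$ transparent enough for Nevanlinna estimates to close the argument, and that the theorems in the paper will establish exactly these conditional forms of the conjecture.
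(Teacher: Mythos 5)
You are right that the statement is a genuine conjecture: the paper does not prove it either, and your closing diagnosis --- that a finite Picard exceptional value is exactly the structural input under which the argument can be closed --- matches what the paper actually delivers in Theorem~\ref{thm_1}. So the issue is not that you failed to prove an open problem; it is that the conditional argument you sketch has a concrete gap at its central step, and the mechanism you propose there is not the one that works.

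The whole weight of your plan rests on showing that $H(z)=h(z+c)-h(z)$ is constant, and for this you offer only ``differentiate logarithmically and combine the lemma on the logarithmic derivative with estimates on the difference operator to force $H'\equiv 0$.'' That does not go through as stated: from $\phi_k(z+c)/\phi_k(z)=e^{-(n+1)H(z)}$ the logarithmic-derivative lemma only bounds proximity functions and cannot by itself exclude a non-constant polynomial $H$. The paper's route in the case $a=0$ is quite different: it first rules out transcendental $\Delta h$ by an order comparison (which is why the hypothesis $\rho_2(f)<\infty$ appears in case (i) of Theorem~\ref{thm_1} and is absent from your sketch), then treats \eqref{1_1} as a linear difference equation and invokes \cite{CF} to get $\rho(B_k)\ge p+1$, while the algebraic identity \eqref{1_3} forces $\rho(h')\le p$ --- a contradiction. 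In the case $a\neq 0$ the paper does not prove constancy of $H$ at all; it writes $f^nf^{(k)}=B_k(z)\sum_j\binom{n}{j}d^{n-j}e^{(j+1)h}$ as in \eqref{e_0} and applies the R\'enyi-type extension Theorem~\ref{lemma1}, whose engine is the Borel/unit-equation Lemma~\ref{lemY}, to conclude directly that $e^{h_c}/e^{h}$ is a root of unity. Finally, your $k=1$ paragraph dismisses the case $A\neq 0$ by Picard's theorem only in the model situation $f=e^h$; the unconditional $k=1$ result you allude to requires a value-distribution argument (multiplicities of the values $0$ and $-A$ of $f^{n+1}$ via the second main theorem), not just zero-freeness.
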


This conjecture is originally stated in \cite{LLY} for $ n=1 $  and has been known as Conjecture of C. C. Yang. The present form of Conjecture~\ref{con} is stated in \cite{LWY}.
The case $k=1$ of Conjecture~\ref{con} was settled in \cite[Theorem~1]{WH} for $ n=1 $, and in \cite[Theorem~1.2]{LWY} for $ n\ge 2 $.  

\medskip
In the following, we will present three results related to Conjecture~\ref{con}, which are the main subject of this paper. Before that, we recall that the order and the hyper-order of an entire function $ f(z) $ are defined, respectively, by 
	\begin{equation*}
	\rho(f)=\limsup _{r \rightarrow \infty} \frac{\log T(r, f)}{\log r}, \quad \rho_2(f)=\limsup _{r \rightarrow \infty} \frac{\log\log T(r, f)}{\log r},
	\end{equation*}
where $ T(r,f) $ is the Nevanlinna characteristic function of $ f(z) $. 

\bigskip 

Regarding the case when $ k\ge 2 $ in Conjecture~\ref{con}, we mention the following result.

\begin{thm}[{\cite[Theorem 1.2]{LWY}}]\label{thm_LWY}
	Let $ f(z) $ be a transcendental entire function and  $ n ,k$ be positive integers. Suppose that $ f(z)^n f^{(k)}(z) $ is a periodic function with period $ c $, $ f(z) $ has a finite Picard exceptional value, and that $ \rho(f)<\infty $, then $ f(z) $ is a periodic function of period $ c $ or $ (n+1) c $.
\end{thm}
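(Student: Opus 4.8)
The plan is to reduce everything to an exponential-sum identity and then invoke Borel's theorem. First I would use the two global hypotheses---$\rho(f)<\infty$ and the existence of a finite Picard exceptional value $a$---to record a Hadamard-type factorization $f(z)=a+P(z)e^{q(z)}$, where $P$ is a polynomial carrying the (finitely many) zeros of $f-a$ and $q$ is a polynomial with $\deg q=\rho(f)\ge 1$; the lower bound is forced by $f$ being transcendental. Since differentiating the constant $a$ kills it, we get $f^{(k)}(z)=R_k(z)e^{q(z)}$ for a polynomial $R_k\not\equiv 0$ (otherwise $f$ would be a polynomial), and a binomial expansion of $f^n$ then yields
\[
g(z):=f(z)^n f^{(k)}(z)=\sum_{j=0}^{n}c_j(z)\,e^{(j+1)q(z)},\qquad c_j:=\binom{n}{j}a^{\,n-j}P^{\,j}R_k,
\]
a finite sum of exponentials of polynomials with polynomial coefficients, in which $c_n=P^nR_k\not\equiv0$ and, when $a\neq0$, also $c_0=a^nR_k\not\equiv0$.

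The central step is to force $q$ to be affine. Writing the periodicity relation $g(z+c)-g(z)\equiv0$ out term by term produces a vanishing linear combination of the exponentials $e^{(j+1)q(z+c)}$ and $e^{(j+1)q(z)}$ with polynomial coefficients $c_j(z+c)$ and $-c_j(z)$. The key observation is to compare exponents \emph{as polynomials}: two of them, $(j+1)q(z+c)$ and $(i+1)q(z)$, differ by a constant only if $i=j$ and $q(z+c)-q(z)$ is constant, i.e.\ only if $\deg q\le1$. Hence, if $\deg q\ge 2$, all these exponents are pairwise distinct with non-constant differences, and since the polynomial coefficients satisfy $T(r,c_j)=O(\log r)=o\big(T(r,e^{(j+1)q(z+c)-(i+1)q(z)})\big)$, Borel's theorem forces every coefficient---in particular $c_n$---to vanish, a contradiction. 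Therefore $\deg q=1$, say $q(z)=\alpha z+\beta$ with $\alpha\neq0$, so in fact $\rho(f)=1$.

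With $q$ affine I would apply Borel once more. Substituting and using $e^{(j+1)q(z+c)}=e^{(j+1)\alpha c}e^{(j+1)q(z)}$ collapses the relation to $\sum_{j}\big(c_j(z+c)e^{(j+1)\alpha c}-c_j(z)\big)e^{(j+1)q(z)}\equiv0$. The $n+1$ exponents $(j+1)(\alpha z+\beta)$ have pairwise non-constant (degree-one) differences, so Borel gives $c_j(z+c)e^{(j+1)\alpha c}=c_j(z)$ for every $j$. For each index with $c_j\not\equiv0$, comparing leading coefficients of the two sides yields $e^{(j+1)\alpha c}=1$ and then $c_j(z+c)=c_j(z)$, so $c_j$ is a constant. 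If $a\neq0$ this already applies to $j=0$ (as $c_0=a^nR_k$), whence $R_k$, and therefore $P$, is constant; then $f(z)=a+b\,e^{\alpha z+\beta}$ with $e^{\alpha c}=1$, giving $f(z+c)=f(z)$, i.e.\ period $c$. If $a=0$ only the top term survives, $g=c_n e^{(n+1)q}$ with $c_n$ forced constant and $e^{(n+1)\alpha c}=1$, so $f(z)=b\,e^{\alpha z+\beta}$ with $f(z+(n+1)c)=f(z)$, i.e.\ period $(n+1)c$ (and period $c$ exactly when $e^{\alpha c}=1$). This is precisely the stated dichotomy.

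I anticipate the main obstacle to be the case $a\neq0$, where $g$ is a genuine exponential sum rather than a single exponential: one must identify correctly which exponentials are Borel-independent---the distinction between constant and non-constant differences of exponents is exactly what separates $\deg q=1$ from $\deg q\ge2$---and verify the growth hypothesis of Borel's theorem for the polynomial coefficients. The remaining bookkeeping, namely that $R_k\not\equiv0$, that $\deg R_k=\deg P$ with leading coefficient $\alpha^k$ times that of $P$, and the leading-coefficient comparisons in the final step, is routine.
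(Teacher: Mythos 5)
Your argument is correct. Note first that this paper does not actually prove Theorem~\ref{thm_LWY}: it is quoted from \cite{LWY} and used as a black box (the proof of Theorem~\ref{thm_1} explicitly reduces the polynomial-exponent case to it), so there is no in-paper proof to compare against line by line. Your route --- Hadamard factorization $f=a+P\,e^{q}$ from the finite-order and Picard hypotheses, then two applications of Borel's theorem, first to force $\deg q=1$ by observing that for $\deg q\ge 2$ the exponents $(j+1)q(z+c)$ and $(i+1)q(z)$ are pairwise Borel-independent, and then to read off $c_j(z+c)e^{(j+1)\alpha c}=c_j(z)$ for each $j$ --- is sound: the exponent-comparison step, the growth condition $T(r,c_j)=O(\log r)$ against exponentials of order at least $1$, and the leading-coefficient bookkeeping (including $\deg R_k=\deg P$ with leading coefficient $\alpha^k$ times that of $P$) all check out, and the resulting dichotomy (period $c$ when $a\neq 0$, period $(n+1)c$ when $a=0$) is exactly the stated conclusion. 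For comparison, the machinery this paper develops to \emph{generalize} the theorem is different in flavour: for the exceptional value $0$ it writes $f=e^{h}$ with $h$ possibly transcendental and analyzes the difference relation $e^{(n+1)\Delta h}=B_k(z)/B_k(z+c)$ via Chiang--Feng difference-equation growth estimates, and for a non-zero exceptional value it replaces Borel by the Yang--Yi lemma (Lemma~\ref{lemY}) inside Theorem~\ref{lemma1}, which tolerates arbitrary small-function coefficients rather than polynomial ones. Your proof is the more elementary one available precisely because $\rho(f)<\infty$ makes every exponent and coefficient a polynomial.
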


If $ n=1 $ and $ f(z) $ has a non-zero Picard exceptional value, then \cite[Theorem 1.1]{LY} reveals that the conclusion of Theorem~\ref{thm_LWY} occurs without condition on the order. The case when $ n=1 $ and $ f(z) $ has $ 0 $ as a Picard exceptional value is proved in  \cite[p.~455]{LK}. Recently, Theorem~\ref{thm_LWY} was improved in \cite[Theorem 1.1]{LZ} to hold for entire functions with $ \rho_2(f)<1 $ having a finite Borel exceptional value.

%

\medskip
The idea of allowing $ n $ to be negative integer in Conjecture~\ref{con} is considered in \cite{LKL}, where the following particular result is obtained.
%
%
%
%
	\begin{thm}[{\cite[Theorem~1.5]{LKL}}]\label{thm_C}
		Let $ f(z) $ be a transcendental entire and $ k $ be a positive integer. Suppose that $f^{(k)}(z)/f(z)$ is a periodic function with period $ c $, and $ f(z) $ has a Picard exceptional value $ d \not= 0 $. Then $ f(z) $ is a periodic function with period $ c $.
	\end{thm}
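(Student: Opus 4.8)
The plan is to exploit the omitted value to give $f$ an exponential representation and then reduce the periodicity of $f^{(k)}/f$ to a single algebraic identity to which the second main theorem applies, entirely without a hypothesis on the order. Since $d\neq 0$ is a Picard exceptional value, $f-d$ is entire and zero-free, so I may write $f=d+e^{g}$ for an entire function $g$. As $d$ is constant, $f^{(k)}=(e^{g})^{(k)}=A\,e^{g}$, where $A:=(e^{g})^{(k)}/e^{g}$ is entire; by the lemma on the logarithmic derivative $T(r,A)=m(r,A)=S(r,f)$, so $A$ is a small function of $f$. Note $A\not\equiv 0$, since otherwise $f^{(k)}\equiv 0$ and $f$ would be a polynomial.

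Writing $\tilde g(z)=g(z+c)$ and $e^{p}:=e^{\tilde g}/e^{g}$, the periodicity relation $f^{(k)}(z+c)/f(z+c)=f^{(k)}(z)/f(z)$ becomes, after substituting $f^{(k)}=A\,e^{g}$ and clearing denominators, the identity
\[
B\,e^{p}f=A\,d\,(1-e^{p}),\qquad B:=\tilde A-A,
\]
where $\tilde A(z)=A(z+c)$ is again a small function of $f$ (the small-function class is shift stable), hence so is $B$. If $B\equiv 0$, then $A\,d\,(1-e^{p})\equiv 0$, and since $A\not\equiv 0$ and $d\neq 0$ we get $e^{p}\equiv 1$; this means $e^{g(z+c)}=e^{g(z)}$, i.e. $f(z+c)=f(z)$, which is exactly periodicity with period $c$. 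Here the hypothesis $d\neq 0$ is essential, as it is what forces $e^{p}\equiv 1$ rather than merely a constant.

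It remains to rule out $B\not\equiv 0$, and this is the heart of the argument. Dividing the displayed identity by $e^{p}$ and solving for $e^{-p}$ gives $e^{-p}=\kappa f+1$ with $\kappa:=B/(Ad)$ a nonzero small function of $f$. The decisive observation is that $e^{-p}$ is entire and zero-free, so $\kappa f+1$ never vanishes; equivalently $f$ omits the small target $-1/\kappa$, and from $\kappa f+1=e^{-p}$ one sees that the zeros of $f+1/\kappa$ are absorbed by the (few) zeros of $A$, whence $\overline N(r,1/(f+1/\kappa))=S(r,f)$. Applying the second main theorem for the three distinct small targets $\infty$, $d$ and $-1/\kappa$, and using that $f$ is entire ($\overline N(r,f)=0$) and omits $d$ ($\overline N(r,1/(f-d))=0$), yields $T(r,f)\le S(r,f)$, a contradiction. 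Hence $B\equiv 0$, and the previous paragraph concludes that $f$ is periodic with period $c$.

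The main obstacle is to reach a contradiction with no control on the growth of $f$: the representation $f=d+e^{g}$ alone gives none, and the gain comes from recognizing—via the logarithmic derivative lemma—that $A$ and $B$ are small functions, which converts the periodicity relation into the clean equation $e^{-p}=\kappa f+1$ and makes the small-function second main theorem applicable. A secondary technical point to check is the shift stability of the small-function class (so that $\tilde A$ and $B$ stay $S(r,f)$), together with the distinctness $-1/\kappa\not\equiv d$; the latter reduces to $\tilde A\not\equiv 0$ and again uses that $f$ is transcendental.
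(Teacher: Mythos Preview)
First, note that the paper does not give its own proof of this statement: Theorem~\ref{thm_C} is quoted from \cite{LKL} as background, and the paper's own contributions (Theorems~\ref{lemma2} and~\ref{thm_1_1}) treat the companion case $f^{(k)}/f^{n}$ with $n\ge 2$. So there is no in-paper argument to compare against directly; I can only assess your proof on its own terms.

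Your reduction is clean and the identity $B\,e^{p}f=Ad(1-e^{p})$ is correct, as is the conclusion when $B\equiv 0$. The genuine gap is the parenthetical claim that ``the small-function class is shift stable,'' on which the entire $B\not\equiv 0$ case rests. Knowing $T(r,A)=S(r,f)$ does \emph{not}, in general, yield $T(r,A(\cdot+c))=S(r,f)$: the logarithmic-derivative lemma applied to $e^{\tilde g}$ gives only $m(r,\tilde A)=S\bigl(r,e^{\tilde g}\bigr)=S\bigl(r,f(\cdot+c)\bigr)$, and nothing in your argument bounds $T\bigl(r,f(\cdot+c)\bigr)$ by $T(r,f)$. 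The standard shift estimates (Chiang--Feng for finite order, Halburd--Korhonen for hyper-order $<1$) all require growth hypotheses that Theorem~\ref{thm_C} deliberately avoids; for entire functions of sufficiently fast growth one can have $T\bigl(r,\phi(z+c)\bigr)/T(r,\phi)\to\infty$. Consequently you have not established that $\kappa=B/(Ad)$ is a small function of $f$, and without this the second main theorem for the three small targets $\infty$, $d$, $-1/\kappa$ is inapplicable. You label this ``a secondary technical point to check,'' but it is the heart of the matter: the whole content of Theorem~\ref{thm_C} over the finite-order versions is precisely that no growth assumption is needed, so the shift comparison cannot be taken for granted and must be extracted from the periodicity relation itself.
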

	
	The conclusion of Theorem~\ref{thm_C} is not always true when $ d=0 $, as shown in \cite{LKL} by taking $ f(z) = e^{e^{i z}+z} $.   

\medskip
Another variation of Conjecture~\ref{con} is addressed in \cite{LZ}, where the differential polynomial $ f(z)^n f^{(k)}(z)$ is replaced with $f(z)^{n}+a_{1} f^{\prime}(z)+\cdots+a_{k} f^{(k)}(z)$. 
In this regard, we present the following result, which combines \cite[Theorem 1.2, Remark~1.2(ii)]{LZ}.

\begin{thm}\label{lu-zhang}
	Let $f(z)$ be a transcendental entire function, and let $ k\ge 1 $ be an integer and $ a_1, \ldots, a_k $  are constants. If $f(z)^{n}+a_{1} f^{\prime}(z)+\cdots+a_{k} f^{(k)}(z)$ is a periodic function with period $ c $, and if one of the following conditions holds
	\begin{enumerate}
		\item $ n=2 $ or $ n\ge 4 $,
		\item  $ n=3 $ and $ \rho_2(f)<1 $,
	\end{enumerate}
	then $ f(z) $ is periodic of period $ c $ or $ nc $.
\end{thm}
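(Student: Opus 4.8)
The plan is to translate the periodicity of $F:=f^{n}+a_1f'+\cdots+a_kf^{(k)}$ into a functional equation relating $f$ to its shift. Writing $L(u)=a_1u'+\cdots+a_ku^{(k)}$ for the linear operator with constant coefficients and $g(z)=f(z+c)$, the identity $F(z+c)=F(z)$ becomes $g^{n}-f^{n}=L(f)-L(g)=-L(h)$, where $h:=g-f=f(z+c)-f(z)$. If $h\equiv0$ then $f$ has period $c$ and we are done, so I would assume $h\not\equiv0$. The target is then to prove that $f(z+c)=\omega f(z)$ for some $\omega$ with $\omega^{n}=1$: the value $\omega=1$ gives period $c$, whereas $\omega\neq1$ gives $f(z+nc)=\omega^{n}f(z)=f(z)$, i.e. period $nc$ (and forces $L(f)=0$, consistent with $F=f^{n}$ being $c$-periodic).

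Next I would factor $g^{n}-f^{n}=h\prod_{j=1}^{n-1}(g-\zeta^{j}f)$ with $\zeta=e^{2\pi i/n}$, so that the cofactor $P:=\prod_{j=1}^{n-1}(g-\zeta^{j}f)=-L(h)/h=-\sum_{j}a_{j}h^{(j)}/h$. Since each $h^{(j)}/h$ has small proximity by the lemma on the logarithmic derivative and $P$ is entire, one gets $T(r,P)=m(r,P)=S(r,h)$. In particular $N(r,1/(g-\zeta^{j}f))\le N(r,1/P)=S(r,h)$ for every $j$, so each value $\zeta^{j}$ ($1\le j\le n-1$) is almost totally deficient for the meromorphic function $w:=g/f$, namely $\overline{N}(r,1/(w-\zeta^{j}))=S(r,h)$. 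Writing $P=f^{n-1}\prod_{j=1}^{n-1}(w-\zeta^{j})$, the problem reduces to showing $w\equiv\zeta^{j_0}$ for some root of unity, since then $g=\zeta^{j_0}f$ and the periodicity conclusion is immediate.

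The three regimes in the hypothesis reflect how much of this can be extracted. For $n\ge4$ there are at least three deficient values $\zeta^{1},\dots,\zeta^{n-1}$ for $w$, so the Second Main Theorem gives $(n-3)T(r,w)\le\sum_{j}\overline{N}(r,1/(w-\zeta^{j}))+S(r,w)=S(r,h)+S(r,w)$; hence $T(r,w)=S(r,h)$, so $T(r,h)\asymp T(r,f)$ and $w$ is a small function relative to $f$. Then in $P=f^{n-1}\prod_{j}(w-\zeta^{j})$ the factor $f^{n-1}$ dominates while $T(r,P)=S(r,f)$, which is impossible unless $\prod_{j}(w-\zeta^{j})\equiv0$; thus $w\equiv\zeta^{j_0}$, as required. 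For $n=2$ the product $P$ is the single factor $g+f$, so $T(r,g+f)=S(r,h)$ directly, and the linear relations $2f=(g+f)-h$ and $2g=(g+f)+h$ recover $T(r,f)\asymp T(r,g)\asymp T(r,h)$ with no appeal to the growth of the shift; a further argument then shows that the small function $g+f$ is itself $c$-periodic, whence $f(z+2c)=f(z)$. For $n=3$ there are only two deficient values, so the Second Main Theorem degenerates (the factor $n-3$ vanishes) and the possible cancellation in $P=(g-\zeta f)(g-\zeta^{2}f)$ cannot be ruled out by value distribution alone; here the assumption $\rho_2(f)<1$ is used, since it makes available the difference analogue of the lemma on the logarithmic derivative, $m(r,g/f)=S(r,f)$, which controls $w=g/f$ and forces it to be constant.

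The main obstacle throughout is exactly the control of the shifted function $g=f(z+c)$ relative to $f$, that is, upgrading the deficiency information ``each $g-\zeta^{j}f$ has few zeros'' to the rigid identity $g=\zeta^{j_0}f$. For $n\ge4$ the Second Main Theorem supplies this unconditionally, and for $n=2$ the single-factor structure makes the growth comparison automatic; the genuinely delicate case is $n=3$, where neither device is available and one is forced to invoke $\rho_2(f)<1$ in order to run difference-operator estimates. This is precisely why the statement bifurcates at $n=3$. Once $f(z+c)=\omega f(z)$ with $\omega^{n}=1$ has been established in each case, the period of $f$ is $c$ when $\omega=1$ and $nc$ otherwise, which completes the proof.
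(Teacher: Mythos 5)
First, a point of comparison: the paper does not actually prove this statement. Theorem~\ref{lu-zhang} is quoted as a known result of L\"u and Zhang \cite{LZ}, and the paper only proves improvements of its case (2) in Section~\ref{VYC}. Your proposal can therefore only be measured against the strategy used there, which is indeed the same skeleton you adopt: factor $f(z+c)^n-f(z)^n$, divide by $\Delta f=f(z+c)-f(z)$, observe that the entire cofactor $-L(\Delta f)/\Delta f$ has characteristic $O(\log (rT(r,\Delta f)))$ by the lemma on the logarithmic derivative, and then study the value distribution of $w=f(z+c)/f(z)$. Your treatment of the case $n\ge 4$ along these lines is essentially complete and correct: the second main theorem applied to $w$ with the $n-1\ge 3$ values $\zeta^j$ gives $T(r,w)=S(r,\Delta f)$, hence $T(r,f)\sim T(r,\Delta f)$, and comparing growth in $P=f^{n-1}\prod_{j}(w-\zeta^j)$ forces $w\equiv\zeta^{j_0}$, which yields the period $c$ or $nc$.

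The cases $n=2$ and $n=3$, however, are not proved; the decisive steps are only asserted. For $n=2$ you reduce to $f(z+c)+f(z)=\phi(z)$ with $\phi=-L(\Delta f)/\Delta f$ small, and then state that ``a further argument shows that $\phi$ is $c$-periodic.'' That is precisely the hard point: $f(z+2c)-f(z)=\phi(z+c)-\phi(z)$, and $\phi(z+c)$ is a shift of a small function whose characteristic cannot be compared with $T(r,f)$ without a growth hypothesis --- which case (1) does not have; one must instead exploit the identity $2\phi f+2L(f)=\phi^2+L(\phi)$, obtained by substituting $f(z+c)=\phi-f$ back into the periodicity relation, and extracting the conclusion from it is a genuine argument, not a remark. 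For $n=3$, the claim that $m\bigl(r,f(z+c)/f(z)\bigr)=S(r,f)$ (available when $\rho_2(f)<1$) ``forces $w$ to be constant'' is false as stated: it only yields $T(r,w)\le N(r,w)+S(r,f)\le N(r,1/f)+S(r,f)$, which is entirely compatible with a non-constant $w$ unless the zeros of $f$ are also controlled. The paper's own proofs of Theorems~\ref{thm_3} and~\ref{thm_2} show how much extra machinery (Weierstrass factorization combined with a Borel-type unit equation, or the additional hypothesis $\Theta(0,f)>0$) is needed exactly at this point. In summary: the plan is the right one and the execution for $n\ge 4$ is sound, but the two remaining cases contain genuine gaps where the real difficulty of the theorem lies.
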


%

\medskip
The rest of the paper is devoted to improving Theorems~\ref{thm_RR}--\ref{lu-zhang}, and it is organized as follows. We prove two extensions of Theorem~\ref{thm_RR} in Section~\ref{RR}, which will be used to prove the results of Section~\ref{YC}. In Section~\ref{YC}, we improve and generalize Theorems~\ref{thm_LWY} and \ref{thm_C}.  Meanwhile, Section~\ref{VYC} contains results improving the Case (2) in Theorem~\ref{lu-zhang}.


 \section{On a Theorem of A. and C. R\'enyi}\label{RR}

Let $ f(z) $ be an entire function. The notation $ S(r,f) $ stands for any quantity satisfying $ S(r,f) = o(T(r,f)) $ as $ r\to \infty $ possibly outside an exceptional set of finite linear/logarithmic measure.  A meromorphic function $ g(z) $ is said to be small function of $ f(z) $ if and only if $ T(r,g)= S(r,f) $.

\medskip

The question addressed here is whether the conclusion of Theorem~\ref{thm_RR} can still hold if the polynomial in $ f(z) $,  $ Q(f) $,  is replaced with a rational function in $ f(z) $, $ P_1(z,f)/P_2(z,f) $, with coefficients being small functions of $ f(z) $. In this section, we treat this question for particular rational functions, and we discuss the sufficient conditions that we offered.  Before stating our results, we recall some notations. First, we introduce a quantity $ \delta_P $ assigned to a polynomial 
	$$ 
	P(z) =c_{\nu_1}z^{\nu_1} + \cdots + c_{\nu_\ell}z^{\nu_\ell}, \quad  \ell \ge 2, \; \nu_1< \cdots < \nu_\ell,
	$$ 
where $ c_{\nu_s}\neq 0  $ for all $ s=1, \ldots,\ell $, and defined by $ \delta_P := \gcd\left(\nu_2-\nu_1, \ldots, \nu_\ell - \nu_{\ell-1} \right)$. For example, if $ P(z) = z^n + z^{m+n} $ ($ m>0 $), then $ \delta_P = m$. Notice that when $ \nu_1 =0 $, then $ \delta_P = \gcd(\nu_2, \ldots, \nu_\ell) $.   The functions $ N(r,f) $ and $ N(r,1/f) $ are, respectively, the integrated counting functions for the poles and zeros of $ f(z) $ in the disc $|z|\le r $ with counting the multiplicities. Analogously, the functions $ \overline{N}(r,f) $ and $ \overline{N}(r,1/f) $ are, respectively, the integrated counting functions for the poles and zeros of $ f(z) $ in the disc $ |z|\le r $ ignoring the multiplicities.

\medskip
We proceed now to state and prove our results. 

\begin{theorem}\label{lemma1}
Let $ g(z) $ be a transcendental entire function such that $N(r,1/g)=S(r,g)$, $ A(z) $ be non-vanishing meromorphic function satisfying $ T(r,A) = S(r,g) $, and let $ Q(z) $ be a polynomial with at least two non-zero terms. If $ A(z) Q\left(g(z)\right)$ is periodic of period $ c $, then $ g(z) $ is periodic of period $ c $ or $ \delta_Q c $.
\end{theorem}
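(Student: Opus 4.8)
The plan is to convert the periodicity of $F(z):=A(z)Q(g(z))$ into a multiplicative relation between $g(z)$ and its shift $g(z+c)$, and then to force that shift to be a constant multiple of $g(z)$, the constant being a $\delta_Q$-th root of unity. Writing $h(z):=g(z+c)$, $\tilde A(z):=A(z+c)$ and $B(z):=A(z)/A(z+c)$, the hypothesis $F(z+c)=F(z)$ reads
\[
\frac{Q(h)}{Q(g)}=B .
\]
First I would record the growth bookkeeping: $B$ is a small function of $g$ (from $T(r,A)=S(r,g)$ together with the comparability of Nevanlinna functions under the shift $z\mapsto z+c$), $h$ is again transcendental entire with $N(r,1/h)=S(r,h)$ and $S(r,h)=S(r,g)$, and, by Valiron--Mokhon'ko applied to the displayed identity, $T(r,h)=T(r,g)+S(r,g)$.

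The central object is the quotient $\Phi:=h/g=g(z+c)/g(z)$, and the goal is to show $\Phi$ is a constant with $\Phi^{\delta_Q}=1$. The pole side is immediate, since poles of $\Phi$ lie over zeros of $g$ and zeros of $\Phi$ over zeros of $h$, so that $N(r,\Phi)+N(r,1/\Phi)\le N(r,1/g)+N(r,1/h)=S(r,g)$. What remains is the proximity bound $m(r,\Phi)=S(r,g)$, after which $T(r,\Phi)=S(r,g)$ and $\Phi$ is a small function of $g$. I expect this to be the main obstacle: it is exactly the difference analogue of the lemma on the logarithmic derivative, $m(r,g(z+c)/g(z))=S(r,g)$, and the delicacy is that only the few--zeros condition $N(r,1/g)=S(r,g)$ is available, with no a priori restriction on the order of $g$. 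I would establish it through the difference version of the logarithmic derivative lemma; should the growth of $g$ be genuinely unrestricted, one can instead argue by the second main theorem, comparing the $\beta$-point preimages of $g$ with those of $h$ that are forced by $Q(h)=B\,Q(g)$, to reach the same conclusion.

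Once $\Phi$ is known to be small, I would substitute $h=\Phi g$ into $Q(h)=B\,Q(g)$ to obtain
\[
\sum_{s=1}^{\ell}c_{\nu_s}\bigl(\Phi^{\nu_s}-B\bigr)g^{\nu_s}\equiv 0 .
\]
Since $g$ is transcendental and the coefficients $c_{\nu_s}(\Phi^{\nu_s}-B)$ are small functions of $g$, the powers $g^{\nu_1},\dots,g^{\nu_\ell}$ are linearly independent over the field of small functions; otherwise $g$ would satisfy a nontrivial polynomial equation over small functions, and comparing characteristics would force $(\nu_s-\nu_{s-1})T(r,g)\le S(r,g)$ for some $s$, a contradiction. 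Hence $\Phi^{\nu_s}=B$ for every $s$, so $\Phi^{\nu_\ell-\nu_s}\equiv 1$ for all $s$; as $\gcd_{s}(\nu_\ell-\nu_s)=\delta_Q$ this gives $\Phi^{\delta_Q}\equiv 1$. A meromorphic function whose $\delta_Q$-th power is identically $1$ takes values in the finite set $\mu_{\delta_Q}$, hence is a constant $\zeta$ with $\zeta^{\delta_Q}=1$.

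Finally I would read off the period. From $\Phi=\zeta$ we get $g(z+c)=\zeta\,g(z)$, whence $g(z+\delta_Q c)=\zeta^{\delta_Q}g(z)=g(z)$, so $g$ is periodic of period $\delta_Q c$; and if $\zeta=1$ then already $g(z+c)=g(z)$, giving period $c$. This matches the two alternatives in the statement. As a consistency check, for $\zeta^{\delta_Q}=1$ one has $Q(\zeta w)=\zeta^{\nu_\ell}Q(w)$, because every exponent of $Q$ is congruent to $\nu_\ell$ modulo $\delta_Q$, so $B=\zeta^{\nu_\ell}$ is indeed a constant compatible with $Q(h)=B\,Q(g)$.
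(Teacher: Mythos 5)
Your reduction of the problem to showing that $\Phi=g(z+c)/g(z)$ is a small function of $g$ is the right target (the paper also ends by showing $(g_c/g)^{\nu_{s+1}-\nu_s}\equiv 1$), and everything you do \emph{after} assuming $T(r,\Phi)=S(r,g)$ — the linear independence of $g^{\nu_1},\dots,g^{\nu_\ell}$ over small functions, $\Phi^{\delta_Q}\equiv 1$, and the reading-off of the period — is sound. The genuine gap is precisely the step you flag as "the main obstacle": the proximity estimate $m(r,g(z+c)/g(z))=S(r,g)$. The difference analogue of the lemma on the logarithmic derivative is only valid under a growth restriction (finite order, or more generally $\rho_2(g)<1$), and Theorem~2.1 carries no such hypothesis; indeed it is applied later in the paper to $g=e^{h}$ with $h$ transcendental entire. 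For such functions the estimate is simply false in general: for $g(z)=e^{e^{z}}$ one has $g(z+c)/g(z)=e^{(e^{c}-1)e^{z}}$, so $m\left(r,g_c/g\right)$ is comparable to $T(r,g)$, not $o(T(r,g))$. Your fallback ("argue by the second main theorem, comparing the $\beta$-point preimages") is not carried out and does not obviously close this hole, since the problematic contribution to $m(r,\Phi)$ comes from the set where $|g|$ is small, which the hypothesis $N(r,1/g)=S(r,g)$ does not control in the proximity sense ($m(r,1/g)$ is as large as $T(r,g)$ here).

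The paper avoids the proximity issue entirely. From $A_cQ(g_c)=AQ(g)$ it divides by $c_{\nu_m}Ag^{\nu_m}$ for each fixed $m$, obtaining an identity of the form $\sum_i f_i=1$ whose terms are built from $g^{\nu_s-\nu_m}$, $g_c^{\nu_s}/g^{\nu_m}$ and $A_c/A$, and then applies the Borel--Nevanlinna type result (Lemma~2.3, i.e.\ Yang--Yi, Theorem~1.62). That lemma requires only upper bounds on the \emph{counting} functions $N(r,1/f_i)$ and $\overline{N}(r,f_i)$, which are exactly what the hypotheses $N(r,1/g)=S(r,g)$ and $T(r,A)=S(r,g)$ supply; no bound on $m(r,g_c/g)$ is ever needed. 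The conclusion $(g_c/g)^{\nu_m}=A/A_c$ for every $m$ then yields $(g_c/g)^{\nu_{s+1}-\nu_s}=1$ directly. To repair your argument you would either have to add a growth hypothesis (which would weaken the theorem and break its later applications) or replace the small-function step by such a unit-equation/Borel argument.
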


From Theorem~\ref{thm_RR}, we see that the conditions on $ Q(z) $ and on zeros of $ g(z) $ can be dropped in Theorem~\ref{lemma1} if $ A(z) $ is constant. In case of $ Q(z) $ is a monomial and $ A(z) $ is non-constant,  the conclusion of Theorem~\ref{lemma1} does not hold in general. In fact, we can always find functions $ A(z) $ and $ g(z) $ for which $ A(z) g(z)^n$ is periodic, but $ g(z) $ is not periodic, where $ n $ is a positive integer. This can be seen, for example, by taking $ A(z)= e^{-nz} $ and $ g(z) = e^{e^{iz}+z} $.   

\medskip
The occurrence of the possibility when $ g(z) $ is of period $ \delta_{Q} c $ can be seen by taking $g(z)=e^{z/3}  $ and $ Q(z) = z^3 + z^6 + z^9 $, where $ \delta_{Q}=3 $. In this case, $ Q(g(z)) = e^z + e^{2z} + e^{3z} $ is of period $ c= 2\pi i$ while $ g(z) $ is of period $ 3c=6\pi i $.

\medskip
The following result is a modification of Theorem~\ref{lemma1}, which treats the case when irreducible rational function in $ f(z) $ is periodic.

\begin{theorem}\label{lemma2}
	Let $ g(z) $ be a transcendental entire function such that $N(r,1/g)=S(r,g)$, $ A(z) $ be non-vanishing meromorphic function satisfying $ T(r,A) = S(r,g) $, and let $ Q(z) $ be a polynomial with at least two non-zero terms. Suppose that $ A(z) Q\left(g(z)\right)/g(z)$ is periodic of period $ c $. Then  
	$ g(z) $ is periodic of period $ 2c $ if $ Q(z)=c_0+c_1z+c_2z^2 $, where $ c_0c_2\neq 0 $. Otherwise, $ g(z) $ is periodic of period $ c $ or $ \delta_{Q} c $.
\end{theorem}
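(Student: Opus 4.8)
The plan is to read the statement as a refinement of Theorem~\ref{lemma1} and to split according to whether $Q$ has a nonzero constant term. Write $Q(z)=c_{\nu_1}z^{\nu_1}+\cdots+c_{\nu_\ell}z^{\nu_\ell}$ with $\nu_1<\cdots<\nu_\ell$ and $\ell\ge2$. If $\nu_1\ge1$ the matter is immediate: $Q(z)/z$ is again a polynomial with at least two nonzero terms, its consecutive exponent gaps are the same as those of $Q$, so $\delta_{Q/z}=\delta_Q$, and since $A(z)Q(g(z))/g(z)=A(z)(Q/z)(g(z))$ Theorem~\ref{lemma1} applies directly and gives that $g$ is periodic of period $c$ or $\delta_Q c$. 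Hence the whole difficulty is the case $\nu_1=0$, i.e.\ $Q(0)=c_0\neq0$, in which $Q(g)/g$ is a genuine Laurent polynomial in $g$ carrying the term $c_0/g$, so Theorem~\ref{lemma1} cannot be quoted verbatim.

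For $\nu_1=0$ I would clear denominators in the periodicity relation $A(z)Q(g(z))/g(z)=A(z+c)Q(g(z+c))/g(z+c)$ to obtain the exact functional equation
\[
A(z+c)\,g(z)\,Q\!\left(g(z+c)\right)=A(z)\,g(z+c)\,Q\!\left(g(z)\right),
\]
and factor $Q(w)=c_{\nu_\ell}\prod_j(w-\beta_j)^{m_j}$ with all $\beta_j\neq0$. The engine, exactly as in the proof of Theorem~\ref{lemma1}, is that $N(r,1/g)=S(r,g)$ together with the second main theorem makes $0$ and $\infty$ exceptional values of $g$ while each $\beta_j$ is taken with full counting function; comparing the zeros on the two sides of the functional equation then forces $g(z+c)$ to be a M\"obius transform of $g(z)$. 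Since $0$ and $\infty$ are exceptional for both $g(z)$ and $g(z+c)$, this M\"obius map must preserve the pair $\{0,\infty\}$, so it is either multiplicative, $g(z+c)=\eta(z)\,g(z)$, or reciprocal, $g(z+c)\,g(z)=\mu(z)$, with $\eta$, resp.\ $\mu$, a small function of $g$.

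It then remains to run the two branches through the functional equation and compare coefficients, using that distinct powers of the transcendental $g$ are linearly independent over small functions. In the multiplicative branch, equating the coefficients of the corresponding powers of $g$ yields $\eta^{\nu_s-1}=A(z)/A(z+c)$ for every $s$, whence $\eta^{\nu_s-\nu_t}=1$ and therefore $\eta^{\delta_Q}=1$; thus $\eta$ is a constant root of unity and $g(z+\delta_Q c)=\eta^{\delta_Q}g(z)=g(z)$, giving period $c$ or $\delta_Q c$. In the reciprocal branch, substituting $g(z+c)=\mu/g$ and viewing both sides of the functional equation as Laurent polynomials in $g$, the extreme exponents are $1$ and $1-\nu_\ell$ on one side but $\nu_\ell-1$ and $-1$ on the other; as the corresponding extreme coefficients are nonzero, the identity can hold only when $\nu_\ell=2$, i.e.\ $Q=c_0+c_1z+c_2z^2$ with $c_0c_2\neq0$. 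Exactly there a further coefficient comparison pins $\mu$ down to the constant $\pm c_0/c_2$, so $g(z+c)g(z)$ is constant and $g(z+2c)=g(z)$.

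Finally I would reconcile the two branches. When $\nu_\ell=2$ the exponents $\nu_s-1$ all lie in $\{-1,0,1\}$, so the multiplicative branch forces $\eta^2=1$ and hence period $c$ or $2c$, while the reciprocal branch gives period $2c$; either way $2c$ is a period, which is the asserted conclusion for $Q=c_0+c_1z+c_2z^2$. In every other configuration the reciprocal branch is excluded, only the multiplicative branch survives, and one is left with period $c$ or $\delta_Q c$. The main obstacle is the rigidity step of the second paragraph — upgrading the shared $\beta_j$-points into an exact M\"obius relation between $g(z+c)$ and $g(z)$ — which is where $N(r,1/g)=S(r,g)$ and the second main theorem do the real work and which I would import from the machinery already developed for Theorem~\ref{lemma1}; a secondary point, in the absence of any growth hypothesis, is that the shifted quantities $A(z+c)$ and $g(z+c)$ are controlled modulo $S(r,g)$ through the functional equation itself rather than through an a priori invariance of the characteristic, again handled as in Theorem~\ref{lemma1}.
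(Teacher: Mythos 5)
Your reduction to the case $\nu_1=0$ and your endgame (the multiplicative/reciprocal dichotomy, the Borel-type comparison of extreme exponents forcing $\nu_\ell=2$ in the reciprocal branch, and the coefficient matching that pins $\mu$ to $\pm c_0/c_2$) all line up with what is actually true, and the first reduction via Theorem~\ref{lemma1} is exactly what the paper does. But the central step of your argument --- that comparing zeros across $A(z+c)\,g(z)\,Q(g(z+c))=A(z)\,g(z+c)\,Q(g(z))$, together with the second main theorem and the exceptionality of $0$ and $\infty$, forces $g(z+c)$ to be a M\"obius transform of $g(z)$ with small-function coefficients --- is a genuine gap, not something you can ``import from the machinery of Theorem~\ref{lemma1}.'' That machinery is Lemma~\ref{lemY}, which concludes that one term of a normalized sum is identically $1$; it does not produce M\"obius relations from value-sharing data. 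What your zero comparison actually yields is that $g(z)$ and $g(z+c)$ share the root set of $Q$ (as a set, counting multiplicity, modulo small terms) while both omit $0$ and $\infty$. If $Q$ has a single multiple root (e.g.\ $Q(z)=(z-1)^2$, which satisfies the hypotheses), this amounts to three CM-shared values, and three CM values are known not to force a M\"obius relation (Gundersen's examples); with two or more distinct roots you only have set-sharing, which is weaker still. So the dichotomy $g_c=\eta g$ versus $g_cg=\mu$ is asserted, not proved.

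The paper obtains precisely this dichotomy by a different, concrete route: it divides the identity by suitable single terms (first by $c_1A$ to get $A_c\equiv A$ when $\nu_2=1$, then by $Ac_0/g$ and by $A_cc_0/g_c$ when $\nu_2\ge2$) so that the relation becomes a sum of terms equal to $1$, and then applies Lemma~\ref{lemY} to each normalization; the surviving term is forced to be either $g_c/g$-type (leading back to Theorem~\ref{lemma1} and period $c$ or $\delta_Qc$) or $gg_c$-type (leading to $gg_c=\pm c_0/c_2$, period $2c$, and, via Borel's lemma, to $Q$ being the quadratic $c_0+c_1z+c_2z^2$). If you replace your M\"obius rigidity claim with this Lemma~\ref{lemY} analysis of the normalized equation, the rest of your write-up goes through essentially verbatim.
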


The case when $ Q(z) $ has the form $ Q(z)=c_0+c_1z+c_2z^2  $ in Theorem~\ref{lemma2} and $ g(z) $ is of period $ 2c $ may occur. For example, the function $ g(z)=e^{e^z} $ is of period $ 2\pi i $ while $ Q(g(z))/g(z) $ is of period $ \pi i $, where $ Q(z) = 1+c z+z^2 $ and $ c\in\mathbb{C} $.

\bigskip 
In the proofs Theorems~\ref{lemma1} and \ref{lemma2},  we will use $ g_c $ and $ A_c$ to stand for $ g(z+c) $ and $ A(z+c) $, respectively. Also, we will frequently use the following lemma.

\begin{lemma}[{\cite[Theorem~1.62]{YY}}]\label{lemY} 
	Let $ f_1(z), \ldots, f_n(z) $, where $ n\ge 3 $, be meromorphic functions which are non-constant except probably $ f_n(z) $. Suppose that $ \sum\limits_{j=1}^n f_j(z) =1 $ and $ f_n(z) \not\equiv 0 $. If
	$$
	\sum_{i=1}^{n}N\left( r,\frac{1}{f_i}\right) +(n-1)\sum_{\substack{i=1\\ i\neq j}}^{n}\overline{N}(r,f_i)<(\lambda+o(1))T(r,f_j), \quad  j=1,2,\ldots,n-1,
	$$
	as $ r\to\infty$ and $ r\in I$, where $\lambda<1$ and $I\subset [0,\infty)  $ is a set of infinite linear measure, then $ f_n(z) \equiv 1 $.
\end{lemma}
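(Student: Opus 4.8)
The plan is to argue by contradiction: assume $f_n \not\equiv 1$ and derive a contradiction with the counting hypotheses. Under this assumption the identity $\sum_{j=1}^n f_j = 1$ may be rewritten as $\sum_{i=1}^{n-1} f_i = 1 - f_n =: h$, where $h \not\equiv 0$. The heart of the matter is to bound each characteristic $T(r, f_j)$ for $1 \le j \le n-1$ essentially by the very quantity $\sum_{i=1}^{n} N(r,1/f_i) + (n-1)\sum_{i\ne j}\overline{N}(r,f_i)$ that appears in the hypothesis, up to an $S(r,f_j)$ error. Since that quantity is assumed to be $< (\lambda + o(1))T(r,f_j)$ with $\lambda < 1$, and since $f_j$ is non-constant so that $T(r,f_j)\to\infty$, the resulting inequality $T(r,f_j) \le (\lambda + o(1))T(r,f_j) + S(r,f_j)$ is impossible, which yields the contradiction. (The hypothesis $f_n \not\equiv 0$ is what makes $f_n \equiv 1$, rather than $f_n \equiv 0$, the relevant alternative, and also guarantees that $N(r,1/f_n)$ is meaningful.)

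First I would reduce to the case where $f_1,\dots,f_{n-1}$ are linearly independent over $\mathbb{C}$. If they are linearly dependent, a minimal vanishing subrelation allows one to eliminate one function, regrouping the identity $\sum_{i=1}^{n-1} f_i = h$ into strictly fewer summands; iterating this descent terminates at a linearly independent configuration. In the independent case the Wronskian $W := W(f_1,\dots,f_{n-1})$ is not identically zero, and differentiating $\sum_{i=1}^{n-1} f_i = h$ successively $k = 0,1,\dots,n-2$ times produces a linear system whose solution by Cramer's rule gives $f_j = W_j/W$, where $W_j$ is obtained from $W$ by replacing its $j$-th column with $(h, h', \dots, h^{(n-2)})^{T}$.

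Next I would estimate $T(r,f_j) = m(r,f_j) + N(r,f_j)$. Expanding $W_j$ along the replaced column writes $f_j = W_j/W = \sum_{k=0}^{n-2}\frac{h^{(k)}}{h}\,h\,\frac{C_{jk}}{W}$, where each cofactor ratio $C_{jk}/W$ is a quotient of determinants of derivatives of the $f_i$. By the lemma on the logarithmic derivative, $m\bigl(r,h^{(k)}/h\bigr) = S(r,h)$ and the proximity functions of the determinant ratios $C_{jk}/W$ contribute only $S$-terms, so $m(r,f_j)$ is controlled by $m(r,h)=m(r,1-f_n)$ up to error terms. For the counting part, the poles of $W$ and of the cofactors $C_{jk}$, together with the zeros and poles of the $f_i$, account precisely for the terms $\sum_i N(r,1/f_i)$ and $\sum_{i\ne j}\overline{N}(r,f_i)$; the weight $(n-1)$ reflects the order to which a common pole of the $f_i$ can occur in an $(n-1)$-st order Wronskian. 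Collecting these estimates yields $T(r,f_j)\le \sum_{i=1}^n N(r,1/f_i) + (n-1)\sum_{i\ne j}\overline{N}(r,f_i) + S(r,f_j)$, which is the bound needed to close the contradiction.

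The main obstacle will be the bookkeeping in this counting-function estimate. One must verify that, after using $f_j = W_j/W$ to cancel the contribution of the poles of $f_j$ itself, the poles introduced by the denominator $W$ and by the cofactors $C_{jk}$ do not exceed the weighted sum $(n-1)\sum_{i\ne j}\overline{N}(r,f_i)$, and that the zeros entering through $N(r,1/W)$ are absorbed into $\sum_i N(r,1/f_i)$; this is exactly the place where the precise shape of the hypothesis, namely the exclusion of the index $j$ from the pole sum and the factor $n-1$, is indispensable. A secondary but genuine difficulty is the descent to the linearly independent case: one must check that eliminating a linearly dependent summand preserves both the form of the identity and a counting inequality strong enough to run the Wronskian argument on the surviving functions.
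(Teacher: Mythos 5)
The paper does not prove this lemma at all --- it is quoted verbatim from \cite[Theorem~1.62]{YY} --- so the relevant comparison is with the classical Borel--Nevanlinna Wronskian argument behind that result, which is indeed the strategy you outline (contradiction, reduction to linear independence, Cramer's rule on the differentiated identity, logarithmic derivative lemma, counting bookkeeping with weight $n-1$). However, your central formula is wrong. Differentiating $\sum_{i=1}^{n-1} f_i = h$ gives the linear system $Mc=b$ where $M$ is the Wronskian matrix of $f_1,\dots,f_{n-1}$ and $b=(h,h',\dots,h^{(n-2)})^{T}$; but this system is solved by the \emph{constant} vector $c=(1,\dots,1)^{T}$, and since $W=\det M\not\equiv 0$ the solution is unique, so Cramer's rule yields $W_j/W=1$, i.e.\ $W_j\equiv W$ --- not $f_j=W_j/W$. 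A two-function check makes this concrete: with $f_1=e^{z}$, $f_2=e^{2z}$, $h=f_1+f_2$, one computes $W=e^{3z}$ and $W_1=hf_2'-h'f_2=e^{3z}$, so $W_1/W=1\neq f_1$. Consequently your expansion of $W_j$ along the replaced column estimates the characteristic of the constant $1$, and the entire proximity/counting chain that was supposed to bound $T(r,f_j)$ collapses. The repair is standard but structurally different: one must solve for the functions themselves, e.g.\ writing $\sum_i (f_i^{(k)}/f_i)\,f_i = h^{(k)}$ (or normalizing $g_i:=f_i/h$ so that $\sum_i g_i=1$ and $\sum_i g_i^{(k)}=0$ for $k\geq 1$) and applying Cramer's rule to the matrix of logarithmic-derivative-type entries, whose proximity functions are $S$-terms and whose poles at zeros and poles of the $f_i$ have order at most the order of differentiation --- this is where the weight $n-1$ and the terms $N(r,1/f_i)$ genuinely originate.

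Two further gaps deserve naming. First, working with $\sum_{i=1}^{n-1}f_i=h=1-f_n$ forces the counting functions of $h$ into the estimates, and zeros of $1-f_n$ (the $1$-points of $f_n$) are controlled by nothing in the hypothesis, which bounds only $N(r,1/f_n)$ and $\overline{N}(r,f_n)$; the argument should instead be run on all $n$ summands of $\sum_{j=1}^{n}f_j=1$ directly, with derivatives up to order $n-1$, which is also what produces the weight $n-1$ rather than $n-2$. Second, the linear-dependence descent is not a routine iteration: after eliminating a function via a dependence relation you must verify that the regrouped summands inherit a usable counting hypothesis, track whether $f_n$ is absorbed into the combination, and note that the conclusion $f_n\equiv 1$ (as opposed to a bare contradiction) is extracted precisely from the case analysis in the dependent situation. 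As written, the two load-bearing steps of your outline --- the representation of $f_j$ and the descent --- are respectively incorrect and unproven.
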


\begin{proof}[Proof of Theorem~\ref{lemma1}]
Let $ Q(z) = \sum_{s=1}^{\ell} c_{\nu_s} z^{\nu_s} $, where $ \ell\ge 2 $, $ \nu_1< \ldots< \nu_\ell$ and $ c_{\nu_s} \neq 0$ for all $ s=1, \ldots,\ell $.  Since~$ A(z) Q\left(g(z)\right)$ is periodic of period $ c $, it follows that	
	\begin{equation}\label{lem1}
	A_c \sum_{s=1}^{\ell} c_{\nu_s} g^{\nu_{s}}_c = A \sum_{s=1}^{\ell} c_{\nu_s} g^{\nu_{s}}.
	\end{equation}
Since $ T(r,A) = S(r,g) $, it follows from \eqref{lem1} that $ T(r,g_c) \sim  T(r,g)$ as $ r\to\infty $ probably outside an exceptional set of finite linear measure. 
Let $ m\in \{1, \ldots, \ell\} $. Dividing \eqref{lem1} by $c_{\nu_m} A(z)  g(z)^{{\nu_m}} $ results in
	\begin{equation}\label{lem2}
	\frac{A_c}{A} \sum_{s=1}^{\ell} \frac{c_{\nu_s}}{c_{\nu_m}} \frac{g^{\nu_{s}}_c}{g^{\nu_m}} -  \sum_{\substack{s=1\\ s\neq m}}^{\ell} \frac{c_{\nu_s}}{c_{\nu_m}} g^{{\nu_s}-{\nu_m}}=1.
	\end{equation}
Clearly $ \frac{A_c}{A} \frac{g^{\nu_{s}}_c}{g^{\nu_m}}$ is non-constant for all $ s\neq m $. Then by Lemma~\ref{lemY}, we obtain
	\begin{equation*}
	\left( \frac{g_c}{g}\right) ^{\nu_m } = \frac{A}{A_c}.
	\end{equation*}
Since this is true for every $ m=1, \ldots, \ell $, it follows that
	$$
	\left( \frac{g_c}{g}\right) ^{\nu_2-\nu_1}=\cdots=\left( \frac{g_c}{g}\right) 	^{\nu_\ell-\nu_{\ell-1}}=1.
	$$
This means $ g(z) $ is periodic of period $ c $ or $ \delta_Q c$.	
\end{proof}

\begin{proof}[Proof of Theorem~\ref{lemma2}]
	Let $ Q(z)$ be as in Proof of Theorem~\ref{lemma1}. If $  \nu_1 \neq 0$, then $ Q(z) z^{-1} $ is a polynomial, and hence the results follows from Theorem~\ref{lemma1}. So, we assume that $ \nu_1 =0 $. Similarly as in the proof of Theorem~\ref{lemma1}, one can see that  $ T(r,g_c) \sim  T(r,g)$ as $ r\to\infty $ probably outside an exceptional set of finite linear measure.
	 Next we distinguish two cases:
	
	(1) Case $ \nu_2 =1 $:  By periodicity of $ A(z) Q\left(f(z)\right)/f(z)$, we have
		\begin{equation}\label{e_lem_1}
		\frac{A_{c} c_{0}}{g_c} +A_{c} c_{1}+A_{c} \sum_{s=3}^{\ell} c_{\nu_{s}} g_c^{\nu_{s}-1 	}-\frac{A c_{0}}{g} -A c_{1}-A \sum_{s=3}^{\ell} c_{\nu_{s}} g^{\nu_{s}-1 }=0.
		\end{equation}
	Dividing \eqref{e_lem_1} by $c_1 A(z) $ and using Lemma~\ref{lemY} yield  $ A(z+c) \equiv A(z) $. Using this, and multiplying \eqref{e_lem_1} by $ g(z) / c_0 $, we find
		\begin{equation}\label{e_lem_2}
		 \frac{g}{g_c}+\sum_{s=3}^{\ell} \frac{c_{\nu_{s}}}{c_0} gg_c^{\nu_{s}-1 }-\sum_{s=3}^{\ell} 	\frac{c_{\nu_{s}}}{c_0} g^{\nu_{s} }=1.
		\end{equation}
	If $ \ell =2 $, then the sums in \eqref{e_lem_2} are vanished, and therefore $ g(z) $ is periodic of period $ c $.	
	 We suppose then that $ \ell \ge 3 $. Notice that $ gg_c^{\nu_{s}-1 } $ is non-constant unless  $ \nu_3=2 $. Also, notice that $ {g}/{g_c} $ and $ gg_c $ cannot be constants simultaneously. Hence, if $ gg_c $ is non-constant or $ \nu_3>2 $, then by applying Lemma~\ref{lemY} on \eqref{e_lem_2}, we obtain that $ g(z) $ is periodic of period $ c $. Next, we suppose that $ \nu_3=2 $ and $ {g}/{g_c} $ is non-constant. Then again from Lemma~\ref{lemY} and \eqref{e_lem_2} we obtain that
	 	\begin{equation}\label{pro}
	 	gg_c=c_0/c_2,
	 	\end{equation}
	 which implies that $ g(z) $ is periodic of period $ 2c $. We show next that this case can occur only if $ \ell=3 $. Assume that $ \ell \ge 4 $. We obtain from \eqref{pro} that $ g(z)=e^{\alpha(z)} $ and $ g(z+c)=(c_0/c_2) e^{-\alpha(z)} $, where $ \alpha(z) $ is an entire function.   Then from \eqref{e_lem_2} we obtain 
	 		\begin{equation*}
	 	\sum_{s=4}^{\ell} \frac{c_{\nu_{s}}}{c_0}  (c_0/c_2)^{\nu_{s}-1}e^{-(\nu_{s}-2)\alpha(z)}- \sum_{s=4}^{\ell} 	\frac{c_{\nu_{s}}}{c_0} e^{\nu_{s}\alpha(z)}=0,
 			\end{equation*}
	 which is not possible by Borel's lemma. Thus $ \ell =3 $, and therefore $ Q(z)=c_0+c_1 z +c_2 z^2  $.

	(2) Case $ \nu_2\ge 2 $: In this case, we have
		\begin{equation}\label{e_lem_4}
		\frac{A_{c} c_{0}}{g_c} +A_{c} \sum_{s=2}^{\ell} c_{\nu_{s}} g_c^{\nu_{s}-1 }-\frac{A 	c_{0}}{g} -A \sum_{s=2}^{\ell} c_{\nu_{s}} g^{\nu_{s}-1}=0.
		\end{equation}
	Dividing \eqref{e_lem_4} by $ \frac{Ac_0}{g} $ results in
		\begin{equation}\label{e_lem_5}
			\frac{A_{c} g}{A g_c} +\frac{A_{c}}{A} \sum_{s=2}^{\ell} \frac{c_{\nu_{s}}}{c_0} gg_c^{\nu_{s}-1 } - \sum_{s=2}^{\ell} \frac{c_{\nu_{s}}}{c_0} g^{\nu_{s}}=1.
		\end{equation}
	Here we notice that $ \frac{A_{c}}{A} gg_c^{\nu_{s}-1 } $ is non-constant unless $ \nu_2=2 $. Also, $ \frac{A_{c} g}{A g_c} $ and $ \frac{A_{c}}{A} gg_c $ cannot be constants simultaneously.  If $ \frac{A_{c}}{A} gg_c $ is non-constant or $ \nu_2>2 $, then from \eqref{e_lem_5} and Lemma~\ref{lemY} we obtain that $A_c/A = g_c/g $. Then replacing $ A_c/A $ with $g_c/g $ in \eqref{e_lem_4} results~in 
		\begin{equation*}
		\sum_{s=2}^{\ell} c_{\nu_{s}} g_c^{\nu_{s}}-\sum_{s=2}^{\ell} c_{\nu_{s}} g^{\nu_{s} }=0.
		\end{equation*}
	By Theorem~\ref{lemma1}, we deduce that $ g(z) $ is periodic of $ c $ or $ \delta_Q c $.  
	
	Now,  if $ \frac{A_{c} g}{A g_c} $ is non-constant and $ \nu_2=2 $, then from \eqref{e_lem_5} and Lemma~\ref{lemY} we obtain
		\begin{equation}\label{pro1}
			  gg_c =\frac{c_0}{c_2} \frac{A}{A_{c}}.
		\end{equation}
	On the other hand, dividing \eqref{e_lem_4} by $ \frac{A_cc_0}{g_c} $ results in
		\begin{equation}\label{e_lem_6}
			 -\sum_{s=2}^{\ell} \frac{c_{\nu_{s}}}{c_0} g_c^{\nu_{s} }+\frac{A 	g_c}{A_c g} +\frac{A}{A_c} \sum_{s=2}^{\ell} \frac{c_{\nu_{s}}}{c_0} g_c g^{\nu_{s}-1}=1.
		\end{equation}
	Hence from \eqref{e_lem_6} and Lemma~\ref{lemY} we obtain
		\begin{equation}\label{pro2}
			 g_c g =\frac{c_0}{c_2}\frac{A_c}{A}.
		\end{equation}
	From \eqref{pro1} and \eqref{pro2}, it follows that $ g_c g = \pm c_0/c_2 $. Similarly as previous case (1), we deduce that $ g(z) $ is periodic of period $ 2c $ and $ Q(z) $ has, in this case, the form $ Q(z)=c_0+c_2z^2 $.
\end{proof}

%

\section{Results on Yang's conjecture} \label{YC}

We start this section by proving the following result, which improves Theorem~\ref{thm_LWY}.

\begin{theorem}\label{thm_1}
	Let $ f(z) $ be a transcendental entire function and  $ n ,k$ be positive integers. Suppose that $ f(z)^n f^{(k)}(z) $ is a periodic function with period $ c $, and one of the following  holds.
	\begin{enumerate}
		\item[(i)] $ f(z) $ has the value $ 0 $ as Picard exceptional value, and $ \rho_2(f)<\infty $.
		\item[(ii)] $ f(z) $ has a non-zero Picard exceptional value.
	\end{enumerate}
	Then $f(z)$ is a periodic function of period $c$ or $(n+1)c$.
\end{theorem}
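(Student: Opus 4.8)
The plan is to reduce everything to the single assertion that $\beta:=f(z+c)/f(z)$ is a nonzero \emph{constant}, after which the period is pinned down by a short root-of-unity computation. Writing $f_c=f(z+c)$, suppose we have shown $f_c=\lambda f$ with $\lambda$ constant. Then $f_c^{(k)}=\lambda f^{(k)}$, and substituting into the periodicity identity $f_c^{\,n}f_c^{(k)}=f^{\,n}f^{(k)}$ gives $\lambda^{n+1}f^{\,n}f^{(k)}=f^{\,n}f^{(k)}$, hence $\lambda^{n+1}=1$. Consequently $f(z+(n+1)c)=\lambda^{n+1}f(z)=f(z)$, so $f$ has period $(n+1)c$, and period $c$ precisely when $\lambda=1$. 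Thus the whole theorem comes down to proving $\beta$ constant in each case.

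For case (ii), let $d\neq 0$ be the Picard value and put $g=f-d$. Then $g$ is transcendental entire with $N(r,1/g)=0=S(r,g)$, and since $k\ge 1$ we have $f^{(k)}=g^{(k)}$. Writing $\psi_k:=g^{(k)}/g$, which is a small function of $g$ with $N(r,1/\psi_k)\le T(r,\psi_k)+O(1)=S(r,g)$, I would factor
\[
f^{\,n}f^{(k)}=(d+g)^n g^{(k)}=\psi_k\,Q(g),\qquad Q(w)=w(w+d)^n .
\]
Here $Q$ has the consecutive exponents $1,2,\dots,n+1$, all with nonzero coefficients, so $Q$ has at least two nonzero terms and $\delta_Q=\gcd(1,\dots,1)=1$. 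Applying Theorem~\ref{lemma1} (its proof needs only $N(r,1/A)=S(r,g)$ in place of the non-vanishing of $A$, which $\psi_k$ satisfies) yields that $g$, and hence $f=g+d$, is periodic of period $c$. This is the $\lambda=1$ instance; indeed a non-zero Picard value forces $\lambda=1$, since $f_c=\lambda f$ with $\lambda\neq1$ would give $e^{h_c}=(\lambda-1)d+\lambda e^{h}$, whose right-hand side vanishes where $e^h=-(\lambda-1)d/\lambda\neq 0$, contradicting that $e^{h_c}$ is zero-free.

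For case (i) the value $0$ is Picard-exceptional, so $f=e^{h}$ with $h$ entire and $\rho(h)=\rho_2(f)<\infty$. Then $\phi_k:=f^{(k)}/f$ is entire with $T(r,\phi_k)=m(r,\phi_k)=S(r,f)$ by the lemma on the logarithmic derivative, and its finite order (inherited from $h$) gives $T(r,\phi_{k,c})\sim T(r,\phi_k)=S(r,f)$. Periodicity yields $\beta^{\,n+1}=\phi_k/\phi_{k,c}$, so $T(r,\beta^{n+1})=(n+1)T(r,\beta)=S(r,f)$ and $\beta=e^{h_c-h}$ is itself a small function; the same identity also gives $f_c^{(k)}=\beta^{-n}f^{(k)}$. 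Expanding $f_c=\beta f$ by the Leibniz rule and comparing with $f_c^{(k)}=\beta^{-n}f^{(k)}$, then dividing by $f$, produces the key relation
\[
(\beta^{-n}-\beta)\,\phi_k=\sum_{j=1}^{k}\binom{k}{j}\beta^{(j)}\phi_{k-j}.
\]
The main obstacle is to conclude from this that $\beta^{-n}-\beta\equiv 0$, i.e. $\beta^{n+1}\equiv 1$; since $\beta=e^{h_c-h}$ is entire and zero-free, $\beta^{n+1}\equiv 1$ forces $\beta$ to be a constant root of unity, and the first paragraph then finishes the proof with period $c$ or $(n+1)c$.

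I expect this constancy step to be the genuinely hard part, because every function in the displayed relation ($\beta$, its derivatives, and the $\phi_j=f^{(j)}/f$) is a small function of $f$, so Nevanlinna size estimates \emph{relative to $f$} give no information, and one must instead argue at the level of $h$ using $\rho(h)<\infty$. The mechanism is transparent in the polynomial sub-case: there $\phi_k/\phi_{k,c}$ is a ratio of polynomials, i.e. rational, while $\beta^{n+1}=e^{(n+1)(h_c-h)}$ is an exponential, and equality forces the exponent $h_c-h$ to be constant. For transcendental $h$ of finite order I would carry out the corresponding comparison through a growth/order analysis of the identity $\beta^{n+1}=\phi_k/\phi_{k,c}$ — noting in particular that it makes the zero divisor of $\phi_k$ invariant under $z\mapsto z+c$ while $N(r,1/\phi_k)=S(r,f)$ — to rule out a non-constant exponent and thereby obtain $\beta^{n+1}\equiv 1$.
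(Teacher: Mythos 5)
Your reduction to the constancy of $\beta=f(z+c)/f(z)$ and your treatment of case (ii) are sound and essentially coincide with the paper's argument: the paper also writes $f=e^{h}+d$, factors $f^{n}f^{(k)}=B_k(z)\sum_{j=0}^{n}\binom{n}{j}d^{n-j}e^{(j+1)h}$ with $B_k=f^{(k)}/e^{h}$ playing the role of your $\psi_k$, and invokes Theorem~\ref{lemma1} with $\delta_Q=1$ to get period $c$. (Your observation that the non-vanishing of $A$ in Theorem~\ref{lemma1} can be weakened to $N(r,1/A)=S(r,g)$ is correct and is in fact implicitly needed by the paper too, since $B_k$ need not be zero-free.)

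The genuine gap is in case (i). Writing $f=e^{h}$ with $\rho(h)=\rho_2(f)<\infty$, your identity $\beta^{n+1}=\phi_k/\phi_{k,c}$ is exactly the paper's equation $e^{(n+1)\Delta h}=B_k(z)/B_k(z+c)$, and the order comparison you sketch does rule out $\Delta h$ transcendental (left side would have infinite order, right side has order at most $\rho(h')<\infty$). But that only shows $\Delta h$ is a \emph{polynomial}; the hard case you still must exclude is $\Delta h$ a non-constant polynomial of degree $p\ge 1$, and neither your Leibniz relation nor the remark about the invariance of the zero divisor of $\phi_k$ does this. The paper closes this case by viewing $B_k(z+c)=e^{-(n+1)\Delta h(z)}B_k(z)$ as a linear difference equation and applying Chiang--Feng's order estimate to get $\rho(h')\ge\rho(B_k)\ge p+1$, while rearranging the same equation as $(h')^{k}=-Q_{k-1}+L_{k-1}/\bigl(1-e^{-(n+1)\Delta h}\bigr)$ gives $kT(r,h')\le (k-1)T(r,h')+O(r^{p})+O(\log r)$, hence $\rho(h')\le p$ --- a contradiction. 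Your proposal explicitly defers this step (``I would carry out the corresponding comparison through a growth/order analysis''), so as written the proof of case (i) is incomplete precisely at its crux; you need an argument of the Chiang--Feng type (or a substitute) to eliminate non-constant polynomial $\Delta h$.
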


\begin{proof}
	By the assumption,  $ f(z) $ has the form $ f(z)=e^{h(z)}+d $, where $ h(z) $ is an entire function, and $ d\in\mathbb{C} $. 
		
	From Theorem~\ref{thm_LWY}, we need only to consider the case when $h(z)$ is a transcendental entire function. 

	(i) Case $ d=0 $:  Since $f(z)^nf^{(k)}(z)$ is a periodic function of period $c$, it follows
	\begin{equation*}\label{eq_c}
		f(z)^nf^{(k)}(z)=f(z+c)^nf^{(k)}(z+c).
	\end{equation*}
	Substituting $f(z)=e^{h(z)}$, where $ \rho(h)<\infty $, into this equation gives
	\begin{eqnarray}\label{1}
		e^{(n+1)\Delta h(z)}=\frac{B_k(z)}{B_k(z+c)},
	\end{eqnarray}
	where $ \Delta h(z) = h(z+c)-h(z) $ and 
	\begin{equation}\label{B_k}
		B_k(z) = (h'(z))^k + Q_{k-1}(z),
	\end{equation}  
	and $ Q_{k-1}(z) $ is a differential polynomial in $ h'$ with constant coefficients and of degree $k-1 $.

	Assume that $\Delta h(z)$ is a transcendental entire function. Then \eqref{1} yields	
	$$
	\infty=\rho\left(e^{(n+1)\Delta h}\right)= \rho\left(\frac{B_k(z)}{B_k(z+c)}\right)\leq \rho(h)<\infty,
	$$
	which is a contradiction. Thus $ \Delta h(z) $ is a polynomial.  Now, assume that $ \Delta h(z)$  is non-constant polynomial of degree $p\geq 1$. Then \eqref{1} can be seen as a linear difference equation
	\begin{eqnarray}\label{1_1}
		B_k(z+c)=e^{-(n+1)\Delta h(z)}\, B_k(z).
	\end{eqnarray}
	From this and \cite[Theorem~9.2]{CF}, we obtain that $\rho(B_k)\geq \rho\left(e^{(n+1)\Delta h}\right) +1= p+1$. Hence 
	\begin{equation}\label{1_2}
		\rho(h') \ge \rho(B_k)\geq p+1.
	\end{equation}
	Since $ \Delta h(z) $ is polynomial, it follows from \eqref{B_k} and \eqref{1_1} that
	\begin{eqnarray}\label{1_3}
		(h'(z))^k=-Q_{k-1}(z) +\frac{L_{k-1}(z)}{1-e^{-(n+1)\Delta h(z)}},
	\end{eqnarray}
	where $L_{k-1}(z)$ is is a differential polynomial in $h'$ with polynomial coefficients of degree $k-1$.
	Therefore, \eqref{1_3} 
	results in
	$$
	kT(r,h')\leq (k-1)T(r,h')+O(r^p)+O(\log r),
	$$
	and hence $\rho(h')\leq p$, which contradicts \eqref{1_2}. Thus 	$\Delta h(z)$ is a constant. In this case, one can easily see from \eqref{B_k} that $B_k(z+c)=B_k(z)$. Thus $e^{(n+1)(h(z+c)-h(z))}=1$, that is, $f(z)$ is a periodic function of period $c$ or $(n+1)c$.

	(ii) Case $ d\neq 0 $: In this case, we have
	\begin{equation}\label{e_0}
		f(z)^nf^{(k)}(z) = B_k(z) \sum_{j=0}^n  {n \choose j} d^{n-j} e^{(j+1)h(z)}, 
	\end{equation}
	where  $ d\neq 0 $ and $ B_k(z) $ is defined as in \eqref{B_k}. From Theorem~\ref{lemma1}, we obtain that $ f(z) $ is periodic of period $ c $. This completes the proof.
\end{proof}

In the following result, we extend Theorem~\ref{thm_C} to hold for $f^{(k)}(z)/f(z)^n$.

\begin{theorem}\label{thm_1_1}
	Let $ f(z) $ be a transcendental entire function, and let  $ k\ge 1$ and $ n\ge 2 $ be integers. Suppose that $ f^{(k)}(z)/f(z)^n $ is a periodic function with period $ c $, and one of the following  holds.
	\begin{enumerate}
		\item[(i)] $ f(z) $ has the value $ 0 $ as Picard exceptional value, and $ \rho_2(f)<\infty $.
		\item[(ii)] $ f(z) $ has a non-zero Picard exceptional value.
	\end{enumerate}
	Then $f(z)$ is a periodic function of period $c$, $ 2c $ or $(n-1)c$.
\end{theorem}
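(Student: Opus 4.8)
The plan is to use the Picard exceptional value to write $f(z)=e^{h(z)}+d$, with $d=0$ in case (i) and $d\neq 0$ in case (ii), and then to split the argument along these two cases: the $d=0$ part will mirror the proof of Theorem~\ref{thm_1}(i), while the $d\neq 0$ part will be reduced to Theorem~\ref{lemma2} by passing to a reciprocal. Throughout I set $g=e^{h}$, so that $g$ is a zero-free transcendental entire function with $N(r,1/g)=S(r,g)$, and $f^{(k)}=g\,B_{k}$ with $B_{k}=(h')^{k}+Q_{k-1}$ as in \eqref{B_k}. Since $h'=g'/g$, the logarithmic derivative lemma gives $T(r,h')=S(r,g)$ and hence $T(r,B_{k})=S(r,g)$; this is what lets $B_{k}$ (or $1/B_{k}$) play the role of a small-function coefficient.

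For case (i) ($d=0$) I would substitute $f=g=e^{h}$ into the period relation for $f^{(k)}/f^{n}=e^{(1-n)h}B_{k}$ and, writing $\Delta h(z)=h(z+c)-h(z)$, obtain
\begin{equation*}
	e^{(n-1)\Delta h(z)}=\frac{B_{k}(z+c)}{B_{k}(z)},
\end{equation*}
which is the analogue of \eqref{1} with $n+1$ replaced by $n-1$. Because $n\ge 2$, the integer $n-1$ is positive, so the three-step growth dichotomy of Theorem~\ref{thm_1}(i) applies: $\rho_{2}(f)<\infty$ forces $\rho(h)<\infty$, which excludes transcendental $\Delta h$; the difference-equation bound from \cite[Theorem~9.2]{CF} combined with the identity \eqref{1_3} (again with $n-1$ in place of $n+1$) excludes a non-constant polynomial $\Delta h$; and therefore $\Delta h$ is constant. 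Then $h'$, and hence $B_{k}$, is $c$-periodic, so $e^{(n-1)\Delta h}=1$; thus $f(z+c)=\omega f(z)$ with $\omega^{\,n-1}=1$, and iterating yields $f(z+(n-1)c)=f(z)$, i.e.\ $f$ is periodic of period $(n-1)c$.

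For case (ii) ($d\neq 0$) the decisive observation is that, although $f^{(k)}/f^{n}=B_{k}\,g/(g+d)^{n}$ is not a polynomial expression in $g$, its reciprocal is of the form treated by Theorem~\ref{lemma2}. Since $f^{(k)}/f^{n}$ is $c$-periodic, so is
\begin{equation*}
	\frac{f^{n}}{f^{(k)}}=A(z)\,\frac{Q(g(z))}{g(z)},\qquad A=\frac{1}{B_{k}},\quad Q(w)=(w+d)^{n}.
\end{equation*}
Here $A$ is non-vanishing meromorphic (as $B_{k}$ is entire and $B_{k}\not\equiv 0$, since $f$ is transcendental) with $T(r,A)=S(r,g)$, while $Q$ has constant term $d^{n}\neq 0$ and $n+1\ge 3$ nonzero terms with $\delta_{Q}=1$. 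Theorem~\ref{lemma2} then gives: if $n=2$, then $Q(w)=w^{2}+2dw+d^{2}$ is quadratic with $c_{0}c_{2}\neq 0$, so $g$, and hence $f=g+d$, is periodic of period $2c$; if $n\ge 3$, then $g$, and hence $f$, is periodic of period $c$ or $\delta_{Q}c=c$. Combining the two cases, $f$ is periodic of period $(n-1)c$, or $2c$ (when $n=2$), or $c$ (when $n\ge 3$), which are exactly the periods $c$, $2c$, $(n-1)c$ asserted.

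The main obstacle is case (ii): the direct substitution exhibits $f^{(k)}/f^{n}$ as a genuine rational, not polynomial, function of $g$, to which neither Theorem~\ref{lemma1} nor Theorem~\ref{lemma2} applies. The resolving idea is to invert and consider $f^{n}/f^{(k)}$, which restores the shape $A\,Q(g)/g$ with $Q(w)=(w+d)^{n}$ of nonzero constant term, so that Theorem~\ref{lemma2} both applies and isolates the exceptional period $2c$ precisely for $n=2$. Relative to Theorem~\ref{thm_1}, case (i) needs only the bookkeeping change $n+1\rightsquigarrow n-1$; the one genuinely new constraint is $n\ge 2$, ensuring $n-1\ge 1$ so that the growth contradiction survives (for $n=1$ the exponent degenerates, consistent with $f^{(k)}/f$ being handled separately in Theorem~\ref{thm_C}).
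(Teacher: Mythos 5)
Your proposal is correct and follows essentially the same route as the paper: the paper likewise passes immediately to the reciprocal $f(z)^n/f^{(k)}(z)$, handles $d=0$ by rerunning the argument of Theorem~\ref{thm_1}(i) with $n+1$ replaced by $n-1$, and settles $d\neq 0$ by applying Theorem~\ref{lemma2} to $A\,Q(g)/g$ with $A=1/B_k$ and $Q(w)=(w+d)^n$. Your write-up is in fact more explicit than the paper's (which cites Theorem~\ref{lemma2} without spelling out the identification of $A$, $Q$, and the $n=2$ versus $n\ge 3$ dichotomy), but the underlying argument is the same.
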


\begin{proof}
	Since $ f^{(k)}(z)/f(z)^n $ is periodic with period $ c $, it follows that 
	\begin{equation}\label{eq1}
		\frac{f(z+c)^n}{f^{(k)}(z+c)} = \frac{f(z)^n}{f^{(k)}(z)}. 
	\end{equation}
	By substituting the form $ f(z) = e^{h(z)} +d$ in \eqref{eq1}, we obtain
	\begin{equation}\label{eq2}
		\frac{1}{B_k(z+c)}\left(e^{h(z+c)} +d\right)^n e^{-h(z+c)} =\frac{1}{B_k(z)} \left(e^{h(z)} +d\right)^n e^{-h(z)},
	\end{equation}
	where $ B_k(z) $ is defined in \eqref{B_k}.  Next, we distinguish the cases related to $ d $.
	
	(i) Case $ d=0 $:  In this case,  \eqref{eq2} becomes
	$$
	e^{(n-1) \Delta h(z)} = \frac{B_k(z+c)}{B_k(z)}.
	$$
	Then, from proof of Theorem~\ref{thm_LWY} when $ h(z) $ is polynomial, and from proof Theorem~\ref{thm_1}(i) when $ h(z) $ is transcendental, we obtain that $ f(z) $ is a periodic function with period $ c $ or $ (n-1)c $.
	
	(ii) Case $ d\neq 0 $:   In this case, the result follows directly from Theorem~\ref{lemma2}. This completes the proof.
\end{proof}

If we replace the monomial $ f(z)^n $ by a polynomial with at least two non-zero terms in Theorem~\ref{thm_1} and Theorem~\ref{thm_1_1}, then the restriction on the growth in Case (i) in both theorems is no more needed. 

\begin{theorem}\label{thm_1_poly}
	Let $ f(z) $ be a transcendental entire function, $ P(z) $ be a polynomial with at least two non-zero terms, and $ k \ge 1$  be an integer. Suppose that $ P(f(z)) f^{(k)}(z) $ is a periodic function with period $ c $, and $ f(z) $ has a finite Picard exceptional value. Then $ f(z) $ is a periodic function of period $c$ or $ \delta_P c $. 
\end{theorem}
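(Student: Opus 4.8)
The plan is to eliminate $ f $ in favour of $ e^{h} $ through the Picard exceptional value, recast the hypothesis as the periodicity of a small multiple of a polynomial in $ e^{h} $, and then apply Theorem~\ref{lemma1}. Since Theorem~\ref{lemma1} carries no hypothesis on the order of growth, this is precisely what allows the restriction $ \rho_2(f)<\infty $ of Case~(i) in Theorems~\ref{thm_1} and \ref{thm_1_1} to be dropped. Because $ f $ has a finite Picard exceptional value $ d $, I write $ f(z)=e^{h(z)}+d $ with $ h $ entire; here $ h $ is non-constant because $ f $ is transcendental, so $ g:=e^{h}=f-d $ is a transcendental entire function, non-vanishing, whence $ N(r,1/g)=0=S(r,g) $.

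Next I rewrite $ P(f)f^{(k)} $ in the shape required by Theorem~\ref{lemma1}. Differentiating $ f=e^{h}+d $ gives $ f^{(k)}=B_k(z)\,e^{h} $, with $ B_k $ the differential polynomial $ (h')^{k}+Q_{k-1} $ of \eqref{B_k}; by the lemma on the logarithmic derivative every $ h^{(j)} $ is a combination of logarithmic derivatives of $ g=e^{h} $, so that $ T(r,B_k)=S(r,g) $, and $ B_k\not\equiv0 $ since $ f^{(k)}\not\equiv0 $. Hence
\begin{equation*}
	P(f)\,f^{(k)} = B_k(z)\,e^{h}\,P\!\left(e^{h}+d\right) = B_k(z)\,R\!\left(g\right),\qquad R(w):=w\,P(w+d),
\end{equation*}
so the hypothesis says $ A\,R(g) $ is periodic of period $ c $ with $ A:=B_k $ a small function of $ g $. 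Provided $ R $ has at least two non-zero terms, Theorem~\ref{lemma1} applies and shows that $ g $, hence $ f $, is periodic of period $ c $ or $ \delta_R c $.

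It then remains to match $ \delta_R c $ with the claimed $ \delta_P c $. When $ d=0 $ this is immediate: $ R(w)=w\,P(w) $ has the exponents of $ P $ shifted by one, the consecutive gaps are unchanged, so $ R $ has at least two non-zero terms and $ \delta_R=\delta_P $. When $ d\neq0 $ the exponents of $ R(w)=w\,P(w+d) $ --- and hence $ \delta_R $ --- are controlled by $ P(w+d) $ rather than by $ P $, and to recover a period compatible with $ \delta_P $ I would exploit a rigidity peculiar to the present setting: once Theorem~\ref{lemma1} forces $ g_c/g $ to be a constant root of unity, the increment $ \Delta h:=h(z+c)-h(z) $ is constant, so $ h' $ and therefore $ B_k $ are themselves periodic of period $ c $; substituting $ B_k(z+c)=B_k(z) $ back into the identities obtained in the proof of Theorem~\ref{lemma1} promotes the relations $ (g_c/g)^{\nu_{s+1}-\nu_s}=1 $ on the gaps to $ (g_c/g)^{\mu}=1 $ on the actual exponents $ \mu $ of $ R $, which is what I would use to tie the period of $ f $ to $ \delta_P c $.

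The step I expect to be the real obstacle is the degenerate subcase for $ d\neq0 $ in which $ R(w)=w\,P(w+d) $ collapses to a single power, that is $ P(z)=c_{0}(z-d)^{M} $: here $ R $ has only one non-zero term, Theorem~\ref{lemma1} is unavailable, and one must instead run the difference-equation argument of Theorem~\ref{thm_1}(i) on $ B_k(z+c)=e^{-(M+1)\Delta h}B_k(z) $ to conclude that $ \Delta h $ is constant, and then account separately for the period produced. Together with the verification that $ B_k $ is indeed a small, admissible factor for Theorem~\ref{lemma1}, this degenerate bookkeeping --- reconciling the period coming out of the analysis with the asserted $ \delta_P c $ --- is where the argument needs the most care.
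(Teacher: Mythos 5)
Your reduction is exactly the paper's: write $f=e^{h}+d$, observe $f^{(k)}=B_k e^{h}$ with $B_k$ as in \eqref{B_k} a small function of $g=e^{h}$, and feed $A\,R(g)$ with $A=B_k$ and $R(w)=wP(w+d)$ into Theorem~\ref{lemma1}. The case $d=0$ is complete and coincides with the paper's case (i), since the exponent gaps of $wP(w)$ are those of $P$.

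For $d\neq 0$ you have not closed the argument, and the two obstacles you single out are genuine; they are also precisely the points the paper's own two-line case (ii) passes over. The paper reorders the double sum to get $P(f)f^{(k)}=B_k\sum_{s}\frac{P^{(s)}(d)}{s!}e^{(s+1)h}$ and invokes Theorem~\ref{lemma1} to assert period $c$, tacitly assuming both that at least two of the coefficients $P^{(s)}(d)$ survive and that the surviving exponents force $g_c/g=1$. Both assumptions fail when $d$ is a zero of $P$. In the monomial collapse $P(z)=c_0(z-d)^{M}$ (which still has $M+1$ non-zero terms, so it is not excluded by the hypotheses) Theorem~\ref{lemma1} is unusable, and your fallback --- the difference-equation argument of Theorem~\ref{thm_1}(i) applied to $B_k(z+c)=e^{-(M+1)\Delta h}B_k(z)$ --- requires $\rho(h)<\infty$, i.e.\ exactly the growth restriction this theorem is supposed to dispense with. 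In the non-degenerate case your ``promotion'' step yields at best $(g_c/g)^{\mu}=1$ for the exponents $\mu$ of $R$, hence period $c$ or $\gcd(\mu)\,c$, which need not equal $c$ or $\delta_P c$: for $P(z)=(z-d)+(z-d)^{3}$ with $d=1$ (four non-zero terms, $\delta_P=1$) and $f(z)=e^{i\pi z/c}+1$, the product $P(f)f^{(k)}=(i\pi/c)^{k}\bigl(e^{2i\pi z/c}+e^{4i\pi z/c}\bigr)$ is $c$-periodic while $f$ has minimal period $2c$. So neither your sketch nor the paper's printed argument establishes the statement as written; the correct conclusion must be phrased in terms of $R(w)=wP(w+d)$ (or one must assume $P(d)\neq 0$), and the degenerate monomial case genuinely needs a growth hypothesis.
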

\begin{proof}
	Let $ P(z) = \sum_{j=1}^{n} b_{\nu_j} z^{\nu_j} $. 
	
	(i) Case $ d=0 $: In this case we have
	\begin{equation}\label{ep2}
		P(f(z)) f^{(k)}(z) =B_k(z)\sum_{j=1} ^n b_{\nu_j} e^{(\nu_j+1)h(z)},
	\end{equation}
	where $ B_k(z) $ is defined in \eqref{B_k}. From Theorem~\ref{lemma1}, we deduce that $ f(z) $ is periodic with period $ c $ or $ \delta_P c $.
	
	(ii) Case $ d\neq 0 $: In this case, we have
	$$
	P(f(z)) f^{(k)} (z) = B_k(z)  \sum_{j=1}^n b_{\nu_j} \left(\sum_{s=0}^{\nu_j} a_s e^{(s+1)h(z)}\right),
	$$
	where $ a_s = {\nu_j \choose s} d^{\nu_j-s}$. By changing the order of the sums in the previous equality, we get
	$$
	P(f(z)) f^{(k)} (z) = B_k(z)\sum_{s=0}^{\nu_n}  c_s e^{(s+1)h(z)}, \quad   c_s = a_s\sum_{j=s}^{\nu_n} b_j.
	$$
	From Theorem~\ref{lemma1}, we conclude that $ f(z) $ is a periodic function with period $ c $.
\end{proof}
Theorem~\ref{thm_1_poly} generalizes and improves \cite[Corollary~1.5 and Theorem~1.6]{WLL} (see also \cite[Corollary ~5.1 and Theorem~5.14]{L}).

\begin{theorem}\label{thm_1_1_poly}
	Let $ f(z) $ be a transcendental entire function, $ P(z) $ be a polynomial with at least two non-zero terms, and $ k \ge 1$  be an integer. Suppose that $ f^{(k)}(z) / P(f(z))$ is a periodic function with period $ c $, and $ f(z) $ has a finite Picard exceptional value. Then $ f(z) $ is a periodic function of period $c$, $ 2c $ or $ \delta_P c $. 
\end{theorem}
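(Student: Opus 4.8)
The plan is to mirror the structure of the proof of Theorem~\ref{thm_1_poly}, splitting into the two cases for the finite Picard exceptional value $d$ and reducing each to the appropriate R\'enyi-type extension proved in Section~\ref{RR}. As in the earlier proofs, the assumption that $f(z)$ has a finite Picard exceptional value means $f(z)=e^{h(z)}+d$ for some entire function $h(z)$ and some $d\in\mathbb{C}$. Writing $P(z)=\sum_{j=1}^{n} b_{\nu_j} z^{\nu_j}$, I would first express $P(f(z))$ as a polynomial in $g(z):=e^{h(z)}$ with coefficients that are small functions of $g(z)$, exactly as in \eqref{ep2} and its $d\neq 0$ counterpart, so that $P(f(z))$ takes the form $\sum_{s} c_s e^{(s+1)h(z)}$ with explicit constants $c_s$.

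The key observation is that $f^{(k)}(z)=e^{h(z)} B_k(z)$, where $B_k(z)$ is the differential polynomial in $h'$ defined in \eqref{B_k}. Hence
\begin{equation*}
	\frac{f^{(k)}(z)}{P(f(z))} = \frac{e^{h(z)} B_k(z)}{\sum_{s} c_s e^{(s+1)h(z)}} = \frac{B_k(z)}{\sum_{s} c_s e^{s h(z)}}.
\end{equation*}
Setting $g(z)=e^{h(z)}$ and $A(z)=B_k(z)$, this is precisely a quantity of the form $A(z)\,g(z)\big/Q(g(z))$, equivalently $A(z)\big/\big(Q(g(z))/g(z)\big)$, to which Theorem~\ref{lemma2} applies: indeed $g(z)$ is a transcendental entire function with $N(r,1/g)=S(r,g)$ (in fact $g$ is zero-free), and $A(z)=B_k(z)$ is a small function of $g$ since it is a differential polynomial in $h'=g'/g$ whose characteristic is $S(r,g)$ by the lemma on the logarithmic derivative. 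The periodicity of $f^{(k)}(z)/P(f(z))$ then forces $A(z)\,g(z)/Q(g(z))$ to be periodic of period $c$, and Theorem~\ref{lemma2} yields that $g(z)$ — hence $f(z)$ — is periodic of period $2c$ when $Q$ is the exceptional quadratic, and of period $c$ or $\delta_Q c$ otherwise.

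To finish I would track how $Q$ and its invariant $\delta_Q$ relate to $P$ in each case. In the case $d=0$ the polynomial $Q$ is essentially $P$ itself (up to the shift of exponents caused by the factor $g$), so $\delta_Q=\delta_P$ and the period is $c$, $2c$, or $\delta_P c$, matching the claim. In the case $d\neq 0$ the expansion $P(e^{h}+d)$ produces a full range of exponents $e^{s h}$ for $s=0,1,\ldots,\nu_n$, so the associated $Q$ has consecutive exponents and $\delta_Q=1$; then Theorem~\ref{lemma2} gives period $c$ or $2c$, which is subsumed by the stated conclusion. The main obstacle I anticipate is the careful bookkeeping of which coefficients $c_s$ can vanish and whether the resulting $Q$ can accidentally fall into the special quadratic form $c_0+c_1z+c_2z^2$ of Theorem~\ref{lemma2}; in particular I must verify that $N(r,1/g)=S(r,g)$ and $T(r,B_k)=S(r,g)$ genuinely hold so that the hypotheses of Theorem~\ref{lemma2} are met, and confirm that the period $2c$ possibility is correctly inherited and cannot be further excluded.
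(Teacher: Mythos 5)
Your proposal is correct and is essentially identical to the paper's argument: the paper's proof of this theorem is literally the one line ``follow the proof of Theorem~\ref{thm_1_poly} by using Theorem~\ref{lemma2},'' which is exactly the reduction you carry out (writing $f=e^{h}+d$, $f^{(k)}=e^{h}B_k$, and passing to the reciprocal so that Theorem~\ref{lemma2} applies with $g=e^{h}$ and small function $1/B_k$). Your closing remarks on tracking which coefficients $c_s$ vanish and whether the exceptional quadratic case can arise are in fact more careful than what the paper records.
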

\begin{proof}
	We follow the proof of Theorem~\ref{thm_1_poly} by using Theorem~\ref{lemma2}.
\end{proof}
Theorem~\ref{thm_1_1_poly} generalizes and improves \cite[Theorem~1.7]{WLL} (see also \cite[Theorem~5.16]{L}).


\section{On a variation of Yang's conjecture} \label{VYC}

Recall that the $ p $-iterated order of an entire function $ f(z) $ is defined by 
	\begin{equation*}
	\rho_p(f)=\limsup _{r \rightarrow \infty} \frac{\log_p T(r, f)}{\log r},
	\end{equation*}
where $ \log_1 r= \log r$ and $ \log_{p} = \log\left( \log_{p-1} r\right) $. The {finiteness degree} $ i(f) $ of  $ f(z) $ is defined to be $ i(f):=0 $ if $ f(z) $ is a polynomial, $i(f):= \min \left\{j \in \mathbb{N}: \rho_{j}(f)<\infty\right\} $ if there exists some $ j \in \mathbb{N} $ for which $ \rho_{j}(f)<\infty $,  or otherwise $ i(f):=\infty $.

\medskip
We find that the condition on the growth in Case (2) of Theorem~\ref{lu-zhang} can be relaxed to $ i(f)<\infty $.

\begin{theorem}\label{thm_3}
	Let $f(z)$ be a transcendental entire function, and let $ k\ge 1 $ be an integer and $ a_1, \ldots, a_k $  are constants. If $f(z)^3+a_{1} f^{\prime}(z)+\cdots+a_{k} f^{(k)}(z)$ is a periodic function with period $ c $ and $ i(f)<\infty $, then $ f(z) $ is periodic with period~$ c$, $2c$ or $3c$.
\end{theorem}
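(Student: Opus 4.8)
The plan is to exploit the cubic factorisation $X^3-Y^3=(X-Y)(X-\omega Y)(X-\omega^2 Y)$ with $\omega=e^{2\pi i/3}$, and to turn the periodicity of the differential polynomial into a functional equation for the two entire factors $f_c-\omega f$ and $f_c-\omega^2 f$, where $f_c(z):=f(z+c)$. First I would write the hypothesis as the identity coming from $P(z+c)=P(z)$ for $P=f^3+\sum_{j=1}^k a_j f^{(j)}=:f^3+L[f]$. Putting $g:=f_c-f$ and using $f_c^{(j)}-f^{(j)}=g^{(j)}$, this reads $f_c^3-f^3=-L[g]$. If $g\equiv 0$ then $f$ has period $c$ and we are done, so I assume $g\not\equiv 0$ and divide by $g$ to obtain
$$f_c^2+f_cf+f^2=-\frac{L[g]}{g}=:H.$$
Since $i(f)<\infty$ forces $i(g)<\infty$, the finite-iterated-order form of the lemma on logarithmic derivatives gives $m(r,g^{(j)}/g)=S(r,g)$, hence $T(r,H)=m(r,H)=S(r,g)$; that is, $H$ is a small function of $g$.

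Next I would factor $H=(f_c-\omega f)(f_c-\omega^2 f)=:uv$ with $u,v$ entire. If $u\equiv 0$ or $v\equiv 0$ then $f_c=\omega f$ or $f_c=\omega^2 f$, so $f(z+3c)=f(z)$ and $f$ has period $3c$. Otherwise $u,v\not\equiv 0$, and since $uv=H$ is small I would first check that $f$ cannot be a small function of $g$ (the shifts are comparable in growth because $i(f)<\infty$, so $T(r,f)\neq S(r,g)$); then $f=(u-v)/(\omega^2-\omega)$ together with $v=H/u$ forces both $u$ and $v$ to be large, i.e. $T(r,u),T(r,v)\neq S(r,g)$. The decisive step is the functional equation obtained by comparing $f_c=(u_c-v_c)/(\omega^2-\omega)$ with $f_c=(\omega^2 u-\omega v)/(\omega^2-\omega)$, namely
$$u_c-v_c=\omega^2 u-\omega v.$$
Dividing by $\omega^2 u$ rewrites this as a sum of three terms equal to $1$ whose zero- and pole-counting functions are all $S(r,g)$, because $N(r,1/u)+N(r,1/v)=N(r,1/H)\le T(r,H)=S(r,g)$ (and the same for the shifts). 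I would then apply Lemma~\ref{lemY}: the term $v/(\omega u)\equiv 1$ is excluded since it would give $f\equiv 0$, so one of the ratios $u_c/u$ or $v_c/u$ must be constant. Inserting $u_c=\lambda u$ (resp. $v_c=\mu u$) into the functional equation and using that $(uv)_c=H_c$ is again small while $u^2$ is large forces $\lambda=\omega^2$ (resp. $\mu=-\omega^2$). This leaves exactly two possibilities, $u_c=\omega^2 u,\ v_c=\omega v$ or $u_c=-\omega v,\ v_c=-\omega^2 u$; iterating the shift gives $u(z+3c)=u,\ v(z+3c)=v$ in the first case and $u(z+2c)=u,\ v(z+2c)=v$ in the second. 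As $f=(u-v)/(\omega^2-\omega)$, this yields period $3c$ or $2c$ respectively, completing the list $c,2c,3c$.

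The main obstacle I expect is the growth bookkeeping under the weak hypothesis $i(f)<\infty$. Unlike the case $\rho_2(f)<1$, the shift $f_c$ need not satisfy $T(r,f_c)=T(r,f)+S(r,f)$; one only has comparability up to constants, so I must argue carefully that (i) the logarithmic-derivative estimate giving $T(r,H)=S(r,g)$ survives, (ii) the counting functions $N(r,1/u_c)$ and $N(r,1/v_c)$ of the shifted factors remain $S(r,g)$, and (iii) the classes $S(r,f)$, $S(r,g)$, $S(r,f_c)$ coincide so that ``$u,v$ large'' is meaningful and the hypotheses of Lemma~\ref{lemY} are genuinely satisfied. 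It is precisely the loss of the sharp shift invariance that prevents excluding the matching $u_c=-\omega v$, and hence the period $2c$; this explains why Theorem~\ref{thm_3} must admit $2c$ in addition to the periods $c$ and $3c$ that appear in Theorem~\ref{lu-zhang}(2).
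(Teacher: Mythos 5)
Your proposal follows essentially the same route as the paper: the same cubic factorization into $u=f_c-\omega f$, $v=f_c-\omega^2 f$ with $uv=H$ small, the same functional equation $u_c-v_c=\omega^2 u-\omega v$ divided by $\omega^2 u$, the same Borel-type three-term analysis, and the same two cases yielding the periods $2c$ and $3c$. The growth bookkeeping you flag as the main obstacle is precisely what the paper handles by writing $u=\Pi(z)e^{\alpha(z)}$ via Weierstrass factorization, showing $i(e^{\alpha})=i(f)$ while $i(H)$ is strictly smaller, and invoking a density result to make all the relevant counting functions $o(T(r,e^{\alpha}))$ on a set of infinite measure; otherwise the arguments coincide.
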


We find also that the condition $ i(f)<\infty $ can be dropped at the expense of allowing some restrictions on the zeros of $ f $. In fact, we replace the condition $ i(f)<\infty $ with $\Theta(0,f)>0 $, where
$$
\Theta(0,f):= 1-\limsup _{r \rightarrow \infty} \frac{\overline{N}\left(r, \frac{1}{f}\right)}{T(r, f)}.
$$

\begin{theorem}\label{thm_2}
	Let $f(z)$ be a transcendental entire function, and let $ k\ge 1 $ be an integer and $ a_1, \ldots, a_k $  are constants. If $f(z)^3+a_{1} f^{\prime}(z)+\cdots+a_{k} f^{(k)}(z)$ is a periodic function with period $ c $ and $\Theta(0,f)>0 $, then $ f(z) $ is periodic with period~$ c$ or $ 3c $.
\end{theorem}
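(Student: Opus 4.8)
The plan is to combine the factorization of the cubic difference with the Second Main Theorem applied to the shift quotient $f(z+c)/f(z)$, using $\Theta(0,f)>0$ only at the final step to force a degeneration; in particular no growth hypothesis will be needed. Write $L(y)=a_1y'+\cdots+a_ky^{(k)}$ and $f_c(z)=f(z+c)$. Periodicity of $f^3+L(f)$ gives $f_c^3+L(f_c)=f^3+L(f)$, so with $g:=f_c-f$ and $\omega=e^{2\pi i/3}$,
\[
g\,(f_c-\omega f)(f_c-\omega^2 f)=f_c^3-f^3=-L(g).
\]
If $g\equiv0$ then $f$ has period $c$ and we are done, so assume $g\not\equiv0$. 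Dividing by $g$ exhibits the \emph{entire} function $\psi:=(f_c-\omega f)(f_c-\omega^2 f)=f_c^2+f_cf+f^2$ as $-L(g)/g$; since $m(r,L(g)/g)=S(r,g)$ by the lemma on the logarithmic derivative and $\psi$ is entire, $T(r,\psi)=m(r,\psi)=S(r,g)$. The functional equation forces $T(r,f_c)\sim T(r,f)$, hence $T(r,g)=O(T(r,f))$ and therefore $T(r,\psi)=S(r,f)$. This structural fact---that $f_c^2+f_cf+f^2$ is a small function of $f$---is the engine of the proof.

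Now set $u:=f_c/f$, so that $(u-\omega)(u-\omega^2)=\psi/f^2$, equivalently $f^2=\psi/(u^2+u+1)$. If $\psi\equiv0$ then one of the entire factors $f_c-\omega f$, $f_c-\omega^2 f$ vanishes identically, so $f_c=\omega f$ or $f_c=\omega^2 f$, whence $f_{3c}=f$ and $f$ has period $3c$; it remains only to rule out $\psi\not\equiv0$. In that case $u$ is non-constant, and $f^2=\psi/(u^2+u+1)$ yields $T(r,f)\le T(r,u)+S(r,f)$. I then apply the Second Main Theorem to $u$ with the three values $\omega,\omega^2,\infty$. The crucial observation is that the zeros of $u-\omega$ (resp.\ $u-\omega^2$) are zeros of $f_c-\omega f$ (resp.\ $f_c-\omega^2 f$), and each of these factors \emph{divides} the small function $\psi$; hence each of $\overline{N}(r,1/(u-\omega))$, $\overline{N}(r,1/(u-\omega^2))$ is at most $\overline{N}(r,1/\psi)\le T(r,\psi)=S(r,f)$, while $\overline{N}(r,u)\le\overline{N}(r,1/f)$ because the poles of $u$ lie at the zeros of $f$. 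The Second Main Theorem therefore gives $T(r,u)\le\overline{N}(r,1/f)+S(r,u)+S(r,f)$, and after absorbing $S(r,u)$ into $S(r,f)$ (legitimate, since the same inequality already forces $T(r,u)=O(T(r,f))$) I arrive at $T(r,f)\le\overline{N}(r,1/f)+S(r,f)$. This contradicts $\Theta(0,f)>0$, which asserts $\limsup_{r\to\infty}\overline{N}(r,1/f)/T(r,f)<1$. Hence $\psi\equiv0$, and $f$ is periodic of period $c$ or $3c$.

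The main obstacle is making the Second Main Theorem ``see'' the smallness of $\psi$: one must recognize that $f_c-\omega f$ and $f_c-\omega^2 f$ are exact entire divisors of $\psi$, so that their zero-counting functions are $S(r,f)$, while simultaneously producing the matching lower bound $T(r,f)\le T(r,u)+S(r,f)$ from $f^2=\psi/(u^2+u+1)$. It is exactly this two-sided control of $T(r,u)$---and not any restriction on the order of $f$---that allows $\Theta(0,f)>0$ to close the argument; this is also why the present conclusion excludes the additional period $2c$ permitted in Theorem~\ref{thm_3}, whose proof instead leans on a finiteness-of-order hypothesis.
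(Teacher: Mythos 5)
Your proposal is correct and follows essentially the same route as the paper: divide $f_c^3-f^3=-L(\Delta f)$ by $\Delta f$ to exhibit $H=f_c^2+f_cf+f^2$ as a small entire function, pass to $w=f_c/f$, apply the second main theorem with the values $\infty,\omega,\omega^2$ so that the two finite-value counting functions are controlled by $\overline{N}(r,1/H)=S(r,f)$ and $\overline{N}(r,w)\le\overline{N}(r,1/f)$, and let $\Theta(0,f)>0$ produce the contradiction. The only cosmetic difference is that you bootstrap $T(r,u)=O(T(r,f))$ from the resulting inequality, whereas the paper first establishes $T(r,f)=T(r,w)+S(r,w)$ directly from $w^2+w+1=H/f^2$; the degenerate cases ($\Delta f\equiv0$, $H\equiv0$, $w$ constant) are handled equivalently.
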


To prove Theorems~\ref{thm_3} and \ref{thm_2}, we start first with general preparations.
 
Since $f(z)^3+a_{1} f^{\prime}(z)+\cdots+a_{k} f^{(k)}(z)$ is periodic with period $ c $, it follows
	\begin{equation}\label{e1}
	f(z+c)^3 - f(z)^3 = -\sum_{j=1}^{k} a_j[f^{(j)}(z+c)-f^{(j)}(z)].
	\end{equation}
If $ \sum_{j=1}^{k} a_j[f^{(j)}(z+c)-f^{(j)}(z)] \equiv 0 $, then $ f(z+c)^3 - f(z)^3 \equiv 0 $. Therefore, $ f(z) $ is periodic with period $ c $ or $ 3c $. Thus, in what follows we  suppose that  $ \sum_{j=1}^{k} a_j[f^{(j)}(z+c)-f^{(j)}(z)] \not\equiv 0 $. Moreover, we suppose that $ f(z+c)-f(z) \not\equiv 0 $.  Then \eqref{e1} can be written as
	\begin{equation}\label{e2}
	f(z+c)^2 +f(z+c)f(z)+ f(z)^2 = -\sum_{j=1}^{k} a_j \frac{f^{(j)}(z+c)-f^{(j)}(z)}{f(z+c)-f(z)}.
	\end{equation}
Let $ H(z) $ denotes the right hand side of \eqref{e2}. Then $ H(z) $ is an entire function satisfying
	\begin{equation}\label{e3}
	T(r,H) = O\left(\log T(r,\Delta f) + \log r\right), \quad r\notin E,
	\end{equation}
where $ \Delta f(z) = f(z+c)-f(z) $ and $ E\subset [0,\infty) $ is a subset of finite linear measure. 
By using the notation $ w(z) := f(z+c)/f(z) $,  \eqref{e2} is rewritten as
	\begin{equation}\label{e4}
	w^2 + w+1 = \frac{H}{f^2}.
	\end{equation}
By definition of $ \Delta f $, we have 
	\begin{equation}\label{e4_1}
	f= \frac{\Delta f}{w-1}.
	\end{equation} 
Substituting \eqref{e4_1} into \eqref{e4}, we get
	\begin{equation}\label{e4_2}
	\frac{w^2 + w+1}{(w-1)^2} =\frac{H}{(\Delta f)^2}.
	\end{equation}

Suppose now that $ w $ is constant. Then from \eqref{e3}, \eqref{e4} and \eqref{e4_1} we obtain 
	$$
	2T(r,f) = T(r,H) = O\left(\log T(r,\Delta f) + \log r\right) = O\left(\log T(r,f) + \log r\right),\quad r\not\in E,
	$$
which is a contradiction.  Thus we may suppose that $ w $ is non-constant in all what follows.

\begin{proof}[Proof of Theorem~\ref{thm_3}]
Since $ i(f)<\infty $, it follows that there exists some integer $ p \ge 1 $ for which $ \rho_p(f)<\infty $ and $ \rho_{p-1} (f) = \infty $.

From \eqref{e3}, \eqref{e4} and \eqref{e4_2}, we get
	\begin{equation*}
	T(r,f) = T(r,\Delta f) + O\left(\log T(r,\Delta f) + \log r\right), \quad r\notin E,
	\end{equation*}	
which results in $ i(\Delta f) =p $, and therefore $ i(H)\le p-1 $.
We rewrite \eqref{e2} as follows
	$$
	\left(f(z+c)-q f(z) \right) \left(f(z+c)-q^2 f(z)\right) = H(z),
	$$
where $ q = \frac{-1+i\sqrt{3}}{2} $. By Weierstrass factorization, we obtain
	\begin{equation}\label{equation1}
	f(z+c)-q f(z)=\Pi(z) \mathrm{e}^{\alpha(z)},
	\end{equation}
and then 
	\begin{equation}\label{equation2}
	f(z+c)-q^2 f(z)=\frac{H(z)}{\Pi(z)} \mathrm{e}^{-\alpha(z)},
	\end{equation}
where $ \Pi(z) $ is the canonical product, and $ \alpha(z) $ is an entire function.  From the properties of canonical products, it follows that $ \rho_{p-1}(\Pi)=\lambda_{p-1}(\Pi)\le \rho_{p-1}(H)<\infty $, see \cite[Satz 12.3]{JV}. If $ i\left(e^\alpha\right)\le p-1 $, then from \eqref{equation1} and \eqref{equation2} we have
	\begin{equation}\label{e9}
	(q^2-q)f(z) = \Pi(z) \mathrm{e}^{\alpha(z)}-\frac{H(z)}{\Pi(z)} \mathrm{e}^{-\alpha(z)},
	\end{equation}
and this yields $ i(f) \le p-1$, which is a contradiction. Thus $i\left(e^\alpha\right) = p$. In this case, by applying \cite[Corollary~4.5]{HLWZ}, we obtain that $ N(r,1/\Pi) \le N(r,1/H)\le T(r,H) = o(T(r,e^{\alpha})) $ as $r\to\infty$ and $ r\in F $, where $ \overline{\operatorname{logdens}}(F)=1 $.  
From \eqref{equation1} and \eqref{equation2}, we obtain
	\begin{eqnarray}\label{e14}
		\begin{split}		
	\frac{1}{q} \frac{H(z)}{\Pi(z)^2} \mathrm{e}^{-2 \alpha(z)}
	&+\frac{1}{q^{2}} \frac{\Pi(z+c)}{\Pi(z)} \mathrm{e}^{\alpha(z+c)-\alpha(z)} \\
	&-\frac{1}{q^{2}} \frac{H(z+c)}{\Pi(z)\Pi(z+c)} \mathrm{e}^{-\alpha(z+c)-\alpha(z)}=1 .
		\end{split}
	\end{eqnarray}
By using \cite[Theorem~1.56]{YY}, we distinguish two cases:
	\begin{itemize}
		\item [(i)] 
		
		We have:
			\begin{align*}
				-\frac{1}{q^{2}} \frac{H(z+c)}{\Pi(z)\Pi(z+c)} \mathrm{e}^{-\alpha(z+c)-\alpha(z)} =1
			\end{align*}
		and
			\begin{equation*}
			\frac{1}{q} \frac{H(z)}{\Pi(z)^2} \mathrm{e}^{-2 \alpha(z)}+\frac{1}{q^{2}} \frac{\Pi(z+c)}{\Pi(z)} \mathrm{e}^{\alpha(z+c)-\alpha(z)}=0,
			\end{equation*}
		which results in
			$$
			H(z)=-\frac{1}{q}e^{\alpha(z+c)+\alpha(z)} \Pi(z)\Pi(z+c)
			$$
		and
			$$
			H(z+c)=-q^{2} e^{\alpha(z+c)+\alpha(z)} \Pi(z)\Pi(z+c).
			$$
		Since $q^3=1$, it follows that $ H(z+c) = H(z) $ and $ \Pi(z+2c)e^{\alpha(z+2c)} = \Pi(z) e^{\alpha(z)}$. Therefore, \eqref{e9} reveals that $f(z)$ is a periodic function of period $2c$.

	\item [(ii)] We have
		\begin{equation*}\label{e15}
		\frac{1}{q^{2}} \frac{\Pi(z+c)}{\Pi(z)} \mathrm{e}^{\alpha(z+c)-\alpha(z)} \equiv 1 
		\end{equation*}
	and 
		\begin{equation*}
		\frac{1}{q} \frac{H(z)}{\Pi(z)^2} \mathrm{e}^{-2 \alpha(z)}-\frac{1}{q^{2}} \frac{H(z+c)}{\Pi(z)\Pi(z+c)} \mathrm{e}^{-\alpha(z+c)-\alpha(z)}\equiv 0,
		\end{equation*}
	that is 
		\begin{equation*}
		\Pi(z+c) \mathrm{e}^{\alpha(z+c)} = q^2 \Pi(z) \mathrm{e}^{\alpha(z)} 
		\end{equation*}
	and
		\begin{equation*}
		\frac{H(z+c)}{\Pi(z+c)} \mathrm{e}^{-\alpha(z+c)} = q \frac{H(z)}{\Pi(z)} \mathrm{e}^{-\alpha(z)}.
		\end{equation*}
	Since $q^3=1$, we can easily deduce from this and from \eqref{e9} that $ f(z) $ is periodic with period $ 3c $.\hfill$ \qedhere $
	\end{itemize}	
\end{proof}

\begin{proof}[Proof of Theorem~\ref{thm_2}]
Here, we prove that under the assumption that $f(z+c)-f(z)\not\equiv 0  $, we get a contradiction.
From \eqref{e3} and \eqref{e4_2}, we have $ S(r,w)=S(r, \Delta f) $.  Therefore,  \eqref{e4} leads~to
	\begin{equation}\label{e5}
	T(r,f) =  T(r,w) + S(r,w).
	\end{equation}
Using the second main theorem, together with \eqref{e4}, we obtain
	\begin{eqnarray*}
		T(r,w) &\le&  \overline{N}\left(r,w\right) + \overline{N}\left(r,\frac{1}{w-q}\right) +\overline{N}\left(r,\frac{1}{w-q^2}\right)+ S(r,w) \\
		&=& \overline{N}\left(r,w\right) + \overline{N}\left(r,\frac{1}{w^2+w+1}\right)+ S(r,w)\\
		&\le & \overline{N}\left(r,\frac1f\right) + \overline{N}\left(r,\frac{1}{H}\right)+ S(r,w) = \overline{N}\left(r,\frac1f\right) + S(r,w).
	\end{eqnarray*}
	Hence,  from \eqref{e5}, we obtain
	\begin{equation}\label{ine}
	T(r,f) \le  \overline{N}\left(r,\frac1f\right) + S(r,w).
	\end{equation}
	Since $ \Theta(0,f)>0 $, it follows for any $ \varepsilon \in (0,1) $ sufficiently small, there exists $ R>0 $ such that $ \overline{N}(r,1/f) < (1- \varepsilon) T(r,f) $. This together with \eqref{ine} results in $ T(r,f) = S(r,w) $, which contradicts \eqref{e5}. 
\end{proof}


\begin{thebibliography}{99}
	
	\bibitem{CF} 
	Y. M. Chiang and S. Feng,
	\emph{On the Nevanlinna characteristic of $f(z +\eta)$ and difference equations in the complex plane}.
	Ramanujan J. \textbf{16} (2008), no.~1, 105--129.
	
	
	\bibitem{HLWZ} 
	J. Heittokangas, Z. Latreuch, J. Wang and M. A. Zemirni,
	\emph{On the limits of real-valued functions in sets involving $ \psi $-density, and applications}. arXiv (2020), 15~pp.\\
	\url{https://arxiv.org/abs/2006.02066}.
	
	\bibitem{JV} 
	G. Jank and L. Volkmann,
	\emph{Einf\"{u}hrung in die Theorie der ganzen und meromorphen Funktionen mit Anwendungen auf Differentialgleichungen}.
	Birkh\"{a}user Verlag, Basel, 1985.
	
	\bibitem{Laine}
	I. Laine,
	\emph{Nevanlinna Theory and Complex Differential Equations}.
	De Gruyter Stud. Math., vol. 15, Walter de Gruyter \& Co., Berlin, 1993.
	
	\bibitem{LLY}
	P.~Li, W.~R.~L\"u and C.~C.~Yang,
	\emph{Entire solutions of certain types of nonlinear differential equations}.
	Houston J. Math. \textbf{45} (2019), No.~2, 431--437.
	
	\bibitem{LY} 
	K. Liu and P. Y. Yu,
	\emph{A note on the periodicity of entire functions}.
	Bull. Aust. Math. Soc. \textbf{100} (2019), no.~2, 290--296.
	
	\bibitem{LWY} 
	K. Liu, Y. Wei and P. Y. Yu,
	\emph{Generalized Yang's Conjecture on the periodicity of entire functions}.
	Bull. Korean Math. Soc. \textbf{57} (2020), no.~5, 1259--1267. 
	
	\bibitem{L}
	X. L. Liu, 
	\emph{The periodicity of transcendental meromorphic functions}.
	Publications Univ. Eastern Finland, Diss. Forestry and Natural Sci. \textbf{419} (2021).\\
	\url{http://urn.fi/URN:ISBN:978-952-61-3727-8}
	
	\bibitem{LK}
	X. L. Liu and R. Korhonen,
	\emph{On the periodicity of transcendental entire functions}.
	Bull. Aust. Math. Soc. \textbf{101} (2020), no.~3, 453--465.
	
	\bibitem{LKL}
	X. L. Liu, R. Korhonen and K. Liu,
	\emph{Variations on a conjecture of C. C. Yang concerning periodicity}.
	Comput. Methods Funct. Theory (2021). \\ \url{https://doi.org/10.1007/s40315-020-00359-0}
	
	\bibitem{LZ}
	W. L\"u and X. Zhang,
	\emph{On the periodicity of transcendental entire functions}.
	Results Math. \textbf{75} (2020), no.~4, Paper No.~176. 13~pp.
	
	\bibitem{RR} A. R\'enyi  and C. R\'enyi,
	\emph{Some remarks on periodic entire functions}.
	J. Anal. Math. \textbf{14}(1) (1965), 303--310.
	
	
	
	

	
	
	
	\bibitem{WH} 
	Q. Wang and P. C. Hu,
	\emph{Zeros and periodicity of entire functions}.
	Acta Math. Sci. Ser. A (Chin. Ed.) \textbf{38} (2018), 209--214.
	

	
	
	
	
	
	
	
	
	
	
	
	
	
	
	
	\bibitem{WLL}
	Y. M. Wei, K. Liu and X. L. Liu, 
	\emph{The periodicity on a transcendental entire function with its differential polynomials}.
	To appear in Analysis Math.
	
	\bibitem{YY} 
	C. C. Yang and H. X. Yi, 
	\emph{Uniqueness Theory of Meromorphic Functions}.
	Science Press and Kluwer Acad. Publ., Beijing, 2003.
\end{thebibliography}
\end{document}